\theoremstyle{plain}
\newtheorem{theorem}{Theorem}[section]
\newtheorem{proposition}[theorem]{Proposition}
\newtheorem{corollary}[theorem]{Corollary}
\newtheorem{lemma}[theorem]{Lemma}
\theoremstyle{remark} 
\newtheorem{note}{Note}[section]
\theoremstyle{definition}
\newtheorem{example}{Example}[section]
\newtheorem{remark}{Remark}[section]
\newtheorem{definition}{Definition}[section]
\journal{Nonlinear Analysis}
\begin{document}

\begin{frontmatter}



\title{Existence and uniqueness of mild solutions and evolution operators for a class of non-autonomous conformable fractional semi-linear systems and Their Exact Null Controllability}


\author[inst1]{Dev Prakash Jha}
\ead{devprakash.22@res.iist.ac.in}
\affiliation[inst1]{organization={Department of Mathematics, Indian Institute of Space Science and Technology},
            addressline={Valiamala P.O.}, 
            city={Thiruvananthapuram},
            postcode={695547}, 
            state={Kerala},
            country={India}}
\author[inst1]{Raju K. George}
\ead{george@iist.ac.in}
\begin{abstract}
This paper investigates the controllability of systems governed by conformable fractional order derivatives. It first establishes the existence and uniqueness of evolution operators for non-autonomous fractional-order homogeneous systems, using a suitable initial time defined as the intersection of two specific time intervals. Using the theory of linear evolution operators, Schauder’s fixed-point theorem, and the Banach contraction principle, the study derives a new set of sufficient conditions for the existence and uniqueness of a mild solution to non-autonomous conformable fractional semi-linear systems. Additionally, the paper examines the exact null controllability of abstract systems based on the mild solution. We provide a comprehensive example to demonstrate the applicability of the established  theoretical results.
\end{abstract}




\begin{keyword}
Exact null controllability \sep Non-autonomous systems \sep Evolution operators  \sep Semigroup theory \sep Fractional differential equations \sep Mild solution  \sep   Schauder's fixed-point theorem 
\MSC 26A33 \sep 93B05 \sep 35K58 \sep 34H99 \sep 35C99
\end{keyword}

\end{frontmatter}


\section{Introduction}
\label{sec:intro}
Fractional calculus is an extension of integer-order calculus, and it has become a significant area of research because specific dynamical models can be described more accurately using fractional derivatives rather than integer-order derivatives. Initially explored in a purely mathematical context, fractional integrals and derivatives have gained significance in various scientific and engineering disciplines in recent decades. Their applications span fields such as fluid mechanics, viscoelasticity, biology, physics, image processing, entropy theory, and engineering (see \cite{bib17,bib18,bib19,bib20,bib21,bib22,bib23}).\\

The concept of fractional derivatives traces its origins to L'Hospital's work in 1695, where he initially introduced it. Since then, multiple definitions of fractional derivatives have surfaced, with the Riemann-Liouville and Caputo definitions emerging as the most notable. Additional information on these definitions and their unique characteristics can be found in the references; see (\cite{bib24,bib25,bib26}).\\

Khalil et al. \cite{bib1} were the first to describe a conformable fractional derivative for functions $f:[0, \infty) \rightarrow \mathbb{R}$. This derivative is more general than the classical derivative and follows the usual rules of linearity, product, quotient, Rolle's theorem, and the mean value theorem. It also matches the classical definition of Riemann-Liouville and Caputo on polynomials. Later, Abdeljawad \cite{bib2} introduced the idea of left and right fractional derivatives and used the chain rule, integration by parts, Grönwall's inequality, power series expansion, and Laplace transformation on the fractional derivative to show that they were correct.
Some alternative perspectives on conformable derivatives are explored in (see \cite{bib16,bib27,bib28,bib29,bib30}).\\

Semigroup theory is a powerful tool for solving differential equations, particularly for addressing abstract Cauchy problems. We extend this concept to include semilinear and quasilinear evolution equations and inhomogeneous initial value problems. For foundational information on the theory of semigroups of linear operators, we refer to \cite{engel2000one}, \cite{bib34}, and \cite{goldstein2017semigroups}.\\

This paper proposes a new method for defining the initial time as the intersection of two specific time intervals for a class of non-autonomous conformable fractional-order systems. This method helps to find evolution operators, mild solutions, and exact null controllability for semi-linear systems, and these are results similar to the results of ordinary non-autonomous differential equations.\\

 We apply the theory of semi-groups of linear operators to identify evaluation operators and solve the conformable fractional abstract Cauchy problem, further extending this approach to introduce the mild solution of the abstract systems.\\
 \newpage
Finally, we will establish the boundedness result on $(L_{\zeta})^{-1}(N_{\zeta}^{t_2})$ as demonstrated in \cite{bib36} and then utilize this result, along with the theory of linear evolution operators, to study the solvability of exact null controllability of abstract systems.\\

This paper investigates a class of control systems described by nonlinear differential equations in functional analysis. We designate a Hilbert space for the state, denoted as Z. Our analysis focuses on systems represented by the following nonlinear conformable fractional order differential equation:
\begin{equation} \left. \begin{array}{l} \left (T_\alpha x\right) (t)+A(t)x(t)=Bu(t)+F(t,x(t)), \quad \alpha \in(0,1], \\ 
x(\zeta)=x_{0}, \quad  t \in [t_1,t_2], \quad \text{and} \quad \zeta \in [t_0,t_f] \cap [t_1,t_2],    \end{array} \right\} \tag{P}
\label{eq:P}
\end{equation}
where,

\begin{itemize}
  \item $x(t)$ represents the state that lies in $Z$, and $x_0 \in Z$;
  \item The control function $u(\cdot)$ belongs to $L^2([\zeta,t_2];U)$ of admissible control functions, where $U$ is also a real Hilbert space;
   \item $(T_\alpha f)(t)$ is the conformable fractional derivative of $f$ at $t$, and is defined by
    $$
    \left( T_{\alpha} f \right)(t) = \lim_{\epsilon \rightarrow 0} \frac{f\left(t + \epsilon(t)^{1 - \alpha}\right) - f(t)}{\epsilon}, \quad t > 0;
    $$
  \item Let $A(t): Z \rightarrow Z^*$ be a closed linear operator, which may not necessarily be bounded. Take a look at the group of operators $\{A(t)\}_{\frac{t^\alpha}{\alpha} \in [t_0, t_f]}$ that create an almost strong evolution operator $\Psi_\alpha: \mathcal{D} \rightarrow \text{BL}(Z)$. Here, $\mathcal{D} = \{(\frac{s^\alpha}{\alpha}, \frac{t^\alpha}{\alpha}) \in [t_0, t_f] \times [t_0, t_f]: 0 \leq t_0 \leq \frac{s^\alpha}{\alpha} \leq \frac{t^\alpha}{\alpha} \leq t_f\}$, and $\text{BL}(Z)$ is the space of all bounded linear operators $\Psi_\alpha(t, s)$ from $Z$ into itself, with $||\Psi_\alpha(t, s)|| \leq M$ for $t\geq s;$
  \item $B$ is a bounded linear operator from $U$ into $Z$;
  \item The function $F: [\zeta, Z] \times Z \to Z$ is given and satisfies certain assumptions.
\end{itemize}
\begin{note}
In the upcoming section, we modify the system (\ref{eq:P}) in such a way that
$$\frac{t^\alpha}{\alpha} \in [t_0,t_f] \iff t\in [(\alpha t_0)^\frac{1}{\alpha},(\alpha t_f)^\frac{1}{\alpha}] \iff t \in [t_1,t_2],$$ 
where $t_1=(\alpha t_0)^\frac{1}{\alpha}$ and $t_2=(\alpha t_f)^\frac{1}{\alpha}.$ \\
This has consistently occurred because time, denoted by $t$, is a positive real number.
\end{note}
\newpage
 This work has the following objectives:

\begin{itemize}
\item Establish existence and uniqueness of evolution operators;
\item Establish the existence and uniqueness of mild solutions to semi-linear conformable fractional order systems using linear evolution operators;
\item Obtain exact null controllability of non-autonomous systems;
\item We provide an example to illustrate the applicability of the established  theoretical results
\end{itemize}

This article is structured as follows: In \autoref{sec:Preliminaries},  we recall some fundamental definitions and results on the conformable fractional derivatives. \autoref{sec:evolution}, introduces relevant notations, concepts, hypotheses, and critical results about linear evolution operators and exact null controllability. In \autoref{sec:Exact}, we explore the exact null controllability of systems (\ref{eq:P}). Finally, \autoref{sec:Application}, presents an example to demonstrate the application of the results obtained.

\section{Preliminaries}
\label{sec:Preliminaries}
\begin{definition}(Conformable Fractional Derivative \cite{bib1})
    Consider $f:[0, \infty) \rightarrow \mathbb{R}$. The conformable fractional derivative of $f$ with order $\alpha \in(0,1]$ is expressed as
\begin{equation}
T_{\alpha}(f)(t)=\lim _{\epsilon \rightarrow 0} \frac{f\left(t+\epsilon t^{1-\alpha}\right)-f(t)}{\epsilon}, \quad t>0.
\label{eq:P2}
\end{equation}
If the fractional derivative $f$ of order $\alpha$ is conformable, we designate it $f$ as $\alpha$-differential. This is represented as $T_{\alpha}(f)(t)=f^{(\alpha)}(t)$. If $f$ is $\alpha$-differential within the interval $(0, a)$, $a>0,$ and $\lim _{t \rightarrow 0^{+}} f^{(\alpha)}(t)$ exists, then we denote $f^{(\alpha)}(0)=\lim _{t \rightarrow 0^{+}} f^{(\alpha)}(t)$.
\end{definition}
\begin{definition}(Left Conformable FractionalDerivative \cite{bib2})
A function $f:[a, \infty) \rightarrow \mathbb{R}$ with an order $0 < \alpha \leq 1$ has a left conformable fractional derivative that starts at $a$ and is given by
\begin{equation} \left(T_{\alpha}^{a} f\right)(t)=\lim _{\epsilon \rightarrow 0} \frac{f\left(t+\epsilon(t-a)^{1-\alpha}\right)-f(t)}{\epsilon}, \quad t>a. \label{eq:P3} \end{equation}
Let's denote $ \left(T_{\alpha}^{a} f\right)(t) $ as $ f^{(\alpha, a)}(t) $. In the case where $ a=0 $, we denote $ T_{\alpha}^{a} f $ as $ T_{\alpha}f $. If $ \left(T_{\alpha}^{a} f\right)(t) $ exists on the interval $ (a, b) $, $b>a,$ then $ \left(T_{\alpha}^{a} f\right)(a) = \lim_{t \rightarrow a^{+}} \left(T_{\alpha}^{a} f\right)(t) $.
\end{definition}
\begin{definition}(Right Conformable Fractional Derivative \cite{bib2})
The right conformable fractional derivative of order $\alpha \in(0,1]$ terminating at $b$ of function $f$ is defined as
\begin{equation}
\left({ }_{\alpha}^{b} \mathrm{T} f\right)(t)=-\lim _{\epsilon \rightarrow 0} \frac{f\left(t+\epsilon(b-t)^{1-\alpha}\right)-f(t)}{\epsilon}, \quad t<b. 
\label{eq:P4}
\end{equation}
 If $\left({ }_{\alpha}^{b} \mathrm{T} f\right)(t)$ exist on $(a,b), b>a,$ then $\left({ }_{\alpha}^{b} \mathrm{T} f\right)(b)=\lim _{t \rightarrow b^{-}} \left({ }_{\alpha}^{b} \mathrm{T} f\right)(t). $
\end{definition}
\begin{theorem}(\cite{bib1})\label{thm:th1}
    Suppose $\alpha \in (0,1]$ and $f_1$ and $f_2$ are $\alpha$- differentiable at a point $t>a$. Then:
\begin{enumerate}
  \item $T_{\alpha}^{a}(c f_1 + d f_2) = c T_{\alpha}^{a}(f_1) + d T_{\alpha}^{a}(f_2)$ for all $c, d \in \mathbb{R}$. \hspace{1.7cm} (Linearity)

  \item $T_{\alpha}^{a}(\beta)=0$ for all constant functions $f(t)=\beta \quad \forall t$.
  \item $T_{\alpha}^{a}(f_1 f_2)=f_1 T_{\alpha}^{a}(f_2)+f_2 T_{\alpha}^{a}(f_1)$.
  \item $T_{\alpha}^{a}\left(\frac{f_1}{f_2}\right)=\frac{f_2 T_{\alpha}^{a}(f_1)-f_1 T_{\alpha}^{a}(f_2)}{f_2^{2}}$. \hspace{6cm} (Product rule)
  \item If $f_1$ is differentiable, then $T_{\alpha}^{a}(f_1)(t)=(t-a)^{1-\alpha} \frac{d f_1}{d t}(t)$. \hspace{0.8cm} (Quotient rule)
  \item $T_{\alpha}^{a}\left(\frac{(t-a)^{\alpha}}{\alpha}\right)=1$.
\end{enumerate}
\end{theorem}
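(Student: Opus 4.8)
The plan is to split the six assertions into two groups. Properties (1)--(4) assume only $\alpha$-differentiability, so I would derive them directly from the limit definition \eqref{eq:P3}, mimicking the classical first-principles proofs of the linearity, Leibniz, and quotient rules. Properties (5)--(6) bring in ordinary differentiability and will follow from a single reduction identity connecting $T_\alpha^a$ to the ordinary derivative.

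First I would record a preliminary continuity fact: if $f$ is $\alpha$-differentiable at $t>a$, then $f$ is continuous at $t$. Writing $f(t+\epsilon(t-a)^{1-\alpha})-f(t)=\epsilon\cdot\frac{f(t+\epsilon(t-a)^{1-\alpha})-f(t)}{\epsilon}$ and letting $\epsilon\to 0$, the right-hand side tends to $0\cdot(T_\alpha^a f)(t)=0$; since $(t-a)^{1-\alpha}>0$ the point $t+\epsilon(t-a)^{1-\alpha}$ ranges over a full neighborhood of $t$ as $\epsilon$ varies, so this yields genuine continuity. This lemma is what makes the cross terms and the denominator in the quotient rule well behaved.

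With that in hand, the linearity in (1) is immediate from the additivity of limits, and (2) follows because the difference quotient of a constant is identically zero. For the product rule (3) I would use the standard add-and-subtract device: rewriting the numerator of the difference quotient for $f_1 f_2$ as $f_1(t+\epsilon(t-a)^{1-\alpha})\big[f_2(t+\epsilon(t-a)^{1-\alpha})-f_2(t)\big]+f_2(t)\big[f_1(t+\epsilon(t-a)^{1-\alpha})-f_1(t)\big]$, dividing by $\epsilon$, and passing to the limit while invoking the continuity lemma for $f_1$, gives $f_1(t)(T_\alpha^a f_2)(t)+f_2(t)(T_\alpha^a f_1)(t)$. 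The quotient rule (4) is handled the same way, adding and subtracting $f_1(t)f_2(t)$ in the numerator of the difference quotient for $f_1/f_2$ and using continuity of $f_2$ together with $f_2(t)\neq 0$ to control the denominator.

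The reduction identity underlying (5) is the real engine: for differentiable $f_1$, set $h=\epsilon(t-a)^{1-\alpha}$ so that $h\to 0$ as $\epsilon\to 0$, and rewrite the difference quotient as $\frac{f_1(t+h)-f_1(t)}{h}\,(t-a)^{1-\alpha}$; the first factor converges to $f_1'(t)$, giving $(T_\alpha^a f_1)(t)=(t-a)^{1-\alpha}f_1'(t)$. Finally (6) is a one-line consequence of (5): differentiating $\frac{(t-a)^\alpha}{\alpha}$ yields $(t-a)^{\alpha-1}$, and multiplying by $(t-a)^{1-\alpha}$ gives $1$. I expect the only delicate steps to be the product and quotient rules, where the limits of the cross terms must be justified carefully and the continuity lemma is essential; everything else is routine manipulation of difference quotients.
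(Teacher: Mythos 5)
Your proof is correct; note that the paper itself states this theorem as a cited result from Khalil et al.\ \cite{bib1} and gives no proof of its own. Your argument --- the continuity lemma, the add-and-subtract device for the product and quotient rules, and the substitution $h=\epsilon(t-a)^{1-\alpha}$ reducing $T_\alpha^a$ to $(t-a)^{1-\alpha}\frac{d}{dt}$ --- is essentially the standard proof given in that reference, so there is nothing to flag beyond the implicit hypothesis $f_2(t)\neq 0$ in item (4), which you correctly identify.
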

\begin{definition}(Chain Rule\cite{bib2})
    Let $f$ and $g$ be functions defined on $(a, \infty)$, where $f$ and $g$ are (left) $\alpha$-differentiable with $0 < \alpha \leq 1$. Define $h(t) = f(g(t))$. Then $h(t)$ is (left) $\alpha$-differentiable, and for all $t \neq a$ and $g(t) \neq 0$, and  we have
\begin{equation}
\left(T_{\alpha}^{a} h\right)(t) = \left(T_{\alpha}^{a} f\right)(g(t)) \left(T_{\alpha}^{a} g\right)(t)(g(t) - a)^{\alpha - 1},
\label{eq:P5}
\end{equation}
For $t=a$, we have
\begin{equation}
\left(T_{\alpha}^{a} h\right)(a) = \lim _{t \rightarrow a^{+}} \left(T_{\alpha}^{a} f\right)(g(t)) \left(T_{\alpha}^{a} g\right)(t) (g(t) - a)^{\alpha - 1}.
\label{eq:P6}
\end{equation}
\end{definition}
\begin{definition} ($\alpha -$ conformable integral \cite{bib2})
    \[
I_\alpha^a (f)(t)=\int_a^t f(x) d(x,\alpha) = I_a^1 (t^{\alpha-1} f) = \int_a^t \frac{f(x)}{(x-a)^{1-\alpha}} \, dx ,
\]
where the integral is the usual Riemann improper integral, and \(\alpha \in (0, 1)\).
\end{definition}
\begin{theorem}(Banach Contraction Principle Extension)\label{thm:th2}
   In a complete metric space $(X,d)$,  each contraction map $f:X \rightarrow X$ has a unique fixed point.
\end{theorem}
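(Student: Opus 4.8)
The plan is to prove this classical result by the method of successive approximations (Picard iteration), which simultaneously establishes existence and uniqueness. Let $k \in [0,1)$ denote the contraction constant, so that $d(f(x),f(y)) \le k\, d(x,y)$ for all $x,y \in X$. First I would fix an arbitrary initial point $x_0 \in X$ and define the orbit recursively by $x_{n+1} = f(x_n)$ for $n \ge 0$. The aim is to show that this sequence converges, and that its limit is the desired fixed point.

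The central step is to verify that $(x_n)$ is a Cauchy sequence. Applying the contraction inequality inductively one obtains $d(x_{n+1},x_n) \le k^n\, d(x_1,x_0)$, so that for $m > n$ the triangle inequality gives $d(x_m,x_n) \le \sum_{j=n}^{m-1} k^j\, d(x_1,x_0) \le \frac{k^n}{1-k}\, d(x_1,x_0)$. Since $0 \le k < 1$ we have $k^n \to 0$, so the right-hand side tends to zero uniformly in $m$, establishing the Cauchy property. Completeness of $(X,d)$ then yields a limit $x^* = \lim_{n\to\infty} x_n \in X$. To see that $x^*$ is a fixed point, I would use that a contraction is Lipschitz and hence continuous, so that passing to the limit in the recursion $x_{n+1} = f(x_n)$ gives $x^* = f(x^*)$.

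Finally, uniqueness follows directly from the contraction property: if $x^*$ and $y^*$ are both fixed points, then $d(x^*,y^*) = d(f(x^*),f(y^*)) \le k\, d(x^*,y^*)$, and since $k < 1$ this forces $d(x^*,y^*) = 0$, i.e.\ $x^* = y^*$. I expect the main technical point to be the geometric-series estimate underlying the Cauchy bound, namely ensuring that the summation of the $k^j$ is bounded by $\frac{k^n}{1-k}$ independently of $m$, so that the convergence is uniform; the remaining steps are essentially immediate consequences of completeness, continuity of $f$, and the contraction inequality itself.
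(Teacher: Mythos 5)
Your proof is correct: it is the standard successive-approximation (Picard iteration) argument, with the geometric-series estimate giving the Cauchy property, completeness giving the limit, continuity of the contraction giving the fixed-point equation, and the contraction inequality giving uniqueness. Note that the paper states this classical theorem without any proof (it is quoted as a known result preceding Bryant's extension), so there is no authorial argument to compare against; your write-up is the canonical one and fills that gap correctly.
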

Bryant, V.W \cite{bib3} further expanded upon the Banach contraction theorem subsequently:
\begin{theorem}( Generalized contraction mapping principle )\label{thm:th3}
  Let (X, d) be a complete metric space, and let $f: X \rightarrow X$ be a mapping such that for some positive integer $n, f^{(n)}$ is the contraction on $X$. Then, $f$ has a unique fixed point.
\end{theorem}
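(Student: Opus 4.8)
The plan is to bootstrap from the Banach contraction principle (Theorem~\ref{thm:th2}) applied to the iterate $f^{(n)}$, and then transfer the fixed point back to $f$ using the fact that $f$ commutes with its own iterates. The whole argument has three short movements: produce a fixed point of $f^{(n)}$, promote it to a fixed point of $f$, and rule out any competing fixed points of $f$.

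First I would observe that $f^{(n)}: X \to X$ is by hypothesis a contraction on the complete metric space $(X,d)$. Hence Theorem~\ref{thm:th2} applies directly to $f^{(n)}$ and yields a unique point $x^{*} \in X$ with $f^{(n)}(x^{*}) = x^{*}$. This is the only place where completeness and the contraction hypothesis are used, and it is entirely routine.

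The crux of the argument is the second step: showing that this $x^{*}$ is in fact a fixed point of $f$ itself, not merely of $f^{(n)}$. The key remark is that $f$ commutes with $f^{(n)}$, since both are built from iterating the same map; concretely, $f \circ f^{(n)} = f^{(n+1)} = f^{(n)} \circ f$. Applying this to $x^{*}$ gives
\[
f^{(n)}\bigl(f(x^{*})\bigr) = f\bigl(f^{(n)}(x^{*})\bigr) = f(x^{*}),
\]
so $f(x^{*})$ is also a fixed point of $f^{(n)}$. But $f^{(n)}$ has a \emph{unique} fixed point by Step~1, which forces $f(x^{*}) = x^{*}$. Thus $x^{*}$ is a fixed point of $f$. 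I expect this commutation-and-uniqueness step to be the main conceptual obstacle, since it is precisely where the strengthening over the plain Banach principle lives; everything else is bookkeeping.

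Finally, for uniqueness of the fixed point of $f$, I would take any $y \in X$ with $f(y) = y$ and iterate: applying $f$ repeatedly to $f(y) = y$ gives $f^{(n)}(y) = y$, so $y$ is a fixed point of $f^{(n)}$. Again by the uniqueness granted in Step~1, this compels $y = x^{*}$. Hence $f$ possesses exactly one fixed point, completing the proof.
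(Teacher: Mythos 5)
Your argument is correct and is the standard proof of Bryant's theorem: apply the Banach contraction principle to $f^{(n)}$, use the commutation $f \circ f^{(n)} = f^{(n)} \circ f$ to show the unique fixed point of $f^{(n)}$ is fixed by $f$, and note that every fixed point of $f$ is a fixed point of $f^{(n)}$ to get uniqueness. The paper itself supplies no proof of this statement --- it only cites Bryant --- so there is nothing to compare against; your three-step argument is complete and has no gaps.
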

\begin{theorem}(Schauder's Fixed Point Theorem)
    Let $K$ be a closed convex subset of a Banach space $E$. If $F : K \to K$ is continuous and $F(K)$ is relatively compact, then $F$ has a fixed point in $K$.
\end{theorem}
\begin{proposition}(\cite{bib4})\label{prop:pro4}
There exists a matrix function \( U(t) \) that is \(\alpha\)- conformable derivative, and regular. We have:
\[
 (U^{-1})^{(\alpha)}(t) = -U^{-1}(t)U^{(\alpha)}(t)U^{-1}(t).
\]
\end{proposition}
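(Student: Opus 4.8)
The plan is to mimic the classical identity $(U^{-1})' = -U^{-1}U'U^{-1}$, transported into the conformable setting via the algebraic relation $U(t)U^{-1}(t) = I$ together with a matrix-valued Leibniz rule. First I would record the product rule for matrix-valued conformable derivatives. Theorem~\ref{thm:th1}(3) gives the Leibniz rule for scalar functions; applying it entrywise to the product $F(t)G(t)$ and collecting terms, one obtains
\[
(FG)^{(\alpha)}(t) = F^{(\alpha)}(t)\,G(t) + F(t)\,G^{(\alpha)}(t),
\]
where the ordering of the factors must be preserved because matrix multiplication is noncommutative. The only care needed here is bookkeeping: each entry $(FG)_{ij} = \sum_k F_{ik}G_{kj}$ is a finite sum of scalar products, so the scalar Leibniz rule applies summand by summand and reassembles into the displayed matrix identity.

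Next I would apply the conformable derivative to the defining relation $U(t)U^{-1}(t) = I$. Since the identity matrix is constant, Theorem~\ref{thm:th1}(2) gives $I^{(\alpha)} = 0$. Applying the matrix Leibniz rule to the left-hand side yields
\[
U^{(\alpha)}(t)\,U^{-1}(t) + U(t)\,(U^{-1})^{(\alpha)}(t) = 0.
\]
Isolating the unknown term gives $U(t)(U^{-1})^{(\alpha)}(t) = -U^{(\alpha)}(t)U^{-1}(t)$, and left-multiplying by $U^{-1}(t)$ — legitimate because $U(t)$ is regular — produces exactly
\[
(U^{-1})^{(\alpha)}(t) = -U^{-1}(t)\,U^{(\alpha)}(t)\,U^{-1}(t),
\]
as required.

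The one genuine gap to close is the existence of $(U^{-1})^{(\alpha)}(t)$: the computation above is valid only once we know that $U^{-1}$ is itself $\alpha$-differentiable. I would settle this by passing through ordinary derivatives using Theorem~\ref{thm:th1}(5): for $t>a$ the factor $(t-a)^{1-\alpha}$ is finite and nonzero, so $U$ being $\alpha$-differentiable is equivalent to $U$ being classically differentiable, with $U^{(\alpha)}(t) = (t-a)^{1-\alpha}U'(t)$. Regularity of $U$ then makes the entries of $U^{-1}$ rational (hence differentiable) functions of the entries of $U$ via Cramer's rule, so $U^{-1}$ is differentiable and therefore $\alpha$-differentiable. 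This same reduction in fact furnishes a one-line alternative proof: multiply the classical identity $(U^{-1})' = -U^{-1}U'U^{-1}$ by $(t-a)^{1-\alpha}$ and absorb the factor into the middle derivative.

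I expect the principal obstacle to be not the algebra but the noncommutativity bookkeeping in the Leibniz step — ensuring the scalar product rule is reassembled with the factors in the correct left/right order — together with the minor justification, handled above, that $U^{-1}$ is $\alpha$-differentiable so that the left-hand side is even defined.
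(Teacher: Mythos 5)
Your proof is correct. Note, however, that the paper supplies no proof of this proposition at all—it is quoted verbatim from \cite{bib4}—so there is no in-paper argument to compare against; your route (the entrywise matrix Leibniz rule applied to $U(t)U^{-1}(t)=I$, together with the reduction to the classical derivative via Theorem~\ref{thm:th1}(5) to guarantee that $U^{-1}$ is $\alpha$-differentiable) is the standard one and is complete, with the one-line version at the end being arguably the cleanest way to present it.
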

\begin{theorem}(Differentiation Under the Integral Sign \cite{bib5})\label{thm:th5} The rule for conformable fractional differentiation of an
integral is the following:
\begin{align*}
    &T_{\alpha}\left(\int_{a(t)}^{b(t)}  h(t, s) \, d(s,\alpha)\right)(t)\\
    &=\int_{a(t)}^{b(t)} T_{\alpha}\left(h(t, s) \right)(t)d(s,\alpha)+ h(t, b(t)) (T_{\alpha} b)(t) - h(t, a(t)) (T_{\alpha} a)(t).
\end{align*}
\end{theorem}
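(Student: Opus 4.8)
The plan is to reduce the conformable derivative to an ordinary derivative and then invoke the classical Leibniz integral rule. Writing $G(t) = \int_{a(t)}^{b(t)} h(t,s)\, d(s,\alpha)$ and using the definition of the conformable integral, $G(t) = \int_{a(t)}^{b(t)} h(t,s)\, s^{\alpha-1}\, ds$. Assuming $h$, $a$, and $b$ are smooth enough that $G$ is an ordinary differentiable function of $t$, part (5) of Theorem~\ref{thm:th1} with base point $0$ gives $T_\alpha G(t) = t^{1-\alpha} G'(t)$, so the whole problem collapses to computing the classical derivative $G'(t)$ and then restoring the conformable notation.

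First I would apply the classical Leibniz rule for differentiation under the integral sign. Because the weight $s^{\alpha-1}$ is independent of $t$, the interior derivative acts only on $h$, and I obtain
$$G'(t) = \int_{a(t)}^{b(t)} \frac{\partial h}{\partial t}(t,s)\, s^{\alpha-1}\, ds + h(t,b(t))\, b(t)^{\alpha-1} b'(t) - h(t,a(t))\, a(t)^{\alpha-1} a'(t).$$
Multiplying through by $t^{1-\alpha}$ then produces three terms that I would match one by one against the claimed right-hand side. For the interior term I would push the factor $t^{1-\alpha}$ back inside the integral and use $t^{1-\alpha}\,\partial_t h(t,s) = T_\alpha h(t,s)$ (again part (5) of Theorem~\ref{thm:th1}) together with $s^{\alpha-1}\,ds = d(s,\alpha)$, recovering exactly $\int_{a(t)}^{b(t)} T_\alpha h(t,s)\, d(s,\alpha)$. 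This first term is therefore the routine part.

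The delicate step — and the one I expect to be the main obstacle — is the treatment of the two boundary terms, because the conformable measure weight $s^{\alpha-1}$ is evaluated at the \emph{moving} endpoints, producing residual factors $b(t)^{\alpha-1}$ and $a(t)^{\alpha-1}$. Rewriting $t^{1-\alpha} b'(t) = T_\alpha b(t)$ and $t^{1-\alpha} a'(t) = T_\alpha a(t)$ via part (5) of Theorem~\ref{thm:th1}, faithful bookkeeping gives the upper contribution as $h(t,b(t))\, b(t)^{\alpha-1}\, T_\alpha b(t)$ rather than the bare $h(t,b(t))\, T_\alpha b(t)$ appearing in the statement. Reconciling these requires pinning down precisely how $T_\alpha$ is understood to act on the limit functions: the natural organizing tool is the conformable chain rule \eqref{eq:P5} applied to the composite $u\mapsto\int^{u} h(t,s)\, d(s,\alpha)$ evaluated at $u=b(t)$, which makes the endpoint weight explicit and shows exactly where it originates.

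Consequently, I would first fix this endpoint convention — verifying that, under it, the weight $b(t)^{\alpha-1}$ is absorbed into the intended reading of $T_\alpha b(t)$ (equivalently, that the clean form corresponds to differentiating $\tfrac{b(t)^{\alpha}}{\alpha}$ at the upper limit) — and only then collect the three reassembled terms. The lower-limit computation is identical up to the sign inherited from the Leibniz rule. Once the boundary convention is made consistent with the interior term, the three pieces combine to reproduce the asserted identity; the entire argument is thus elementary modulo this single conventional reconciliation at the endpoints, which is where all the genuine care is needed.
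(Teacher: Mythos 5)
The paper offers no proof of this result at all --- it is imported verbatim from \cite{bib5} --- so there is nothing internal to compare against and your argument must stand on its own. It essentially does: the reduction $T_\alpha G(t)=t^{1-\alpha}G'(t)$ via part (5) of Theorem~\ref{thm:th1}, followed by the classical Leibniz rule applied to $G(t)=\int_{a(t)}^{b(t)}h(t,s)\,s^{\alpha-1}\,ds$, is the right route, and your bookkeeping is correct. More importantly, the obstruction you isolate at the boundary terms is genuine and is not something you can dissolve by ``fixing a convention'': the identity as printed already fails for $h\equiv 1$, $a\equiv 0$, $b(t)=t$, where the left side is $T_\alpha\!\left(\tfrac{t^\alpha}{\alpha}\right)=1$ while the printed right side is $T_\alpha t=t^{1-\alpha}$. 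Your computation gives the correct boundary contribution, namely $h(t,b(t))\,b(t)^{\alpha-1}\,T_\alpha b(t)-h(t,a(t))\,a(t)^{\alpha-1}\,T_\alpha a(t)$, which can equivalently be written as $h(t,b(t))\,T_\alpha\!\left(\tfrac{b(t)^\alpha}{\alpha}\right)-h(t,a(t))\,T_\alpha\!\left(\tfrac{a(t)^\alpha}{\alpha}\right)$, exactly as you suspect. So present this as a correction to the statement rather than a ``reconciliation'': either the weights $b(t)^{\alpha-1}$ and $a(t)^{\alpha-1}$ must appear explicitly, or the symbols $T_\alpha b$, $T_\alpha a$ must be declared to mean the conformable derivatives of $b^\alpha/\alpha$, $a^\alpha/\alpha$. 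With that emendation your three-term assembly is a complete proof; the only remaining housekeeping is to record the hypotheses that license the classical Leibniz rule (continuity of $h$ and $\partial_t h$, differentiability of $a$ and $b$, and $a(t),b(t)>0$ so that the weights and part (5) of Theorem~\ref{thm:th1} are applicable).
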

Recently, Sarikaya et al. \cite{Sarikaya} introduced a new definition for the function of the variable $p$, described as follows:
\begin{equation}\label{eq:Sari_1}
(p)^{\alpha}_{n,k} = (p + \alpha - 1)(p + \alpha - 1 + \alpha k) \cdots (p + \alpha - 1 + (n-1)\alpha k).
\end{equation}
They referred to this as the Pochhammer symbol \((p)^{\alpha}_{n,k}\). By setting \( k \to 1 \) and \(\alpha \to 1\), it reduces to the standard Pochhammer symbol \((p)_n\). The conformable gamma function \(\Gamma^{\alpha}_k\) is defined as:
\begin{equation}\label{eq:Sari_2}
\Gamma^{\alpha}_k(p) = \int_{0}^{\infty} t^{p-1} e^{-\frac{t^{\alpha k}}{\alpha k}} d({t,\alpha}) = \lim_{n \to \infty} \frac{n! \alpha^n k^n (n\alpha k)^{\frac{p+\alpha-1}{\alpha k}-1}}{(p)^{\alpha}_{n,k}}.
\end{equation}
By using this definition, it becomes evident that the following equations exist:
$$ \Gamma(x) = \lim_{(\alpha, k) \to (1, 1)} \Gamma_k^\alpha (x),$$

$$ \Gamma_k^\alpha (p) = (\alpha k)^{\frac{p+\alpha-1}{\alpha k} - 1} \Gamma \left( \frac{p + \alpha - 1}{\alpha k} \right),$$

$$  \Gamma_k^\alpha (p) = (\alpha)^{\frac{p+\alpha-1}{\alpha k} - 1} \Gamma_k \left( \frac{p + \alpha - 1}{\alpha} \right),$$

$$ \Gamma_k^\alpha (x + \alpha k) = (x + \alpha - 1) \Gamma_k (x).$$
The conformable beta function is established with the following definition:

\begin{equation}\label{eq:Sari_3}
B_k^\alpha (x, y) = \frac{1}{\alpha k} \int_0^1 t^{\frac{x}{\alpha k} - 1} (1 - t)^{\frac{y}{\alpha k} - 1} d({t,\alpha}). 
\end{equation}
The following equation relates the conformal beta function to the classical beta function:
\begin{equation}\label{eq:Sari_4}
B_k^\alpha (p, q) = \frac{1}{\alpha} B_k \left( \frac{p + \alpha k (\alpha - 1)}{\alpha}, \frac{q}{\alpha} \right) = \frac{1}{\alpha k} B \left( \frac{p}{\alpha k} + \alpha - 1, \frac{q}{\alpha k} \right),
\end{equation}

and
\begin{equation}\label{eq:Sari_5}
B_k^\alpha (x + \alpha k (1 - \alpha), y) = \frac{\Gamma_k^\alpha (x) \Gamma_k^\alpha (y)}{\Gamma_k^\alpha (x + y + 1 - \alpha)}.
\end{equation}

Furthermore, Sarikaya et al.\ (see \cite{Sarikaya}) explored the properties of the $(\alpha, k)$-gamma and $(\alpha, k)$-beta functions and derived the following properties:

\begin{equation}\label{eq:Sari_6}
\Gamma_k^\alpha (p + n\alpha k) = (p)_{n,k}^\alpha \Gamma_k^\alpha (p),
\end{equation}

\begin{equation}\label{eq:Sari_7}
\Gamma_k^\alpha (\alpha k + 1 - \alpha) = 1,
\end{equation}

\begin{equation}\label{eq:Sari_8}
\Gamma_k^\alpha (p) = a^{\frac{p+\alpha-1}{\alpha k}} \int_0^\infty t^{p-1} e^{-a\frac{t^{\alpha k}}{\alpha k}} d(t,\alpha).
\end{equation}

and
\begin{equation}\label{eq:Sari_9}
B_k^\alpha (p, \alpha k) = \frac{1}{p + \alpha k (\alpha - 1)},
\end{equation}

\begin{equation}\label{eq:Sari_10}
B_k^\alpha (\alpha k (2 - \alpha), q) = \frac{1}{q},
\end{equation}

\begin{equation}\label{eq:Sari_11}
B_k^\alpha (\alpha k, \alpha k) = \frac{1}{k \alpha^2},
\end{equation}

\begin{equation}\label{eq:Sari_12}
B_k^\alpha (p, q) = \frac{p + \alpha k (\alpha - 2)}{p + q + \alpha k (\alpha - 2)} B_k^\alpha (p - \alpha k, q).
\end{equation}

We observe that, as $\alpha \to 1$, $B_k^\alpha (x, y) \to B_k(x, y)$, and as $(k, \alpha) \to (1, 1)$, $B_k^\alpha (x, y) \to B(x, y)$.

\section{Existence and uniqueness of evolution operators}
\label{sec:evolution}
Since Z be Hilbert space with norm $\lVert \cdot\rVert$. For \( t_0 \le \frac{t^\alpha}{\alpha} \le t_f \), the spectrum of \( A(t) \) is contained in a sectorial open domain
\[ \sigma(A(t)) \subset \Sigma_\omega = \{ \lambda_\alpha \in \mathbb{C}: \, |\arg \lambda_\alpha | < \omega \}, \quad \, t_0 \le \frac{t^\alpha}{\alpha} \le t_f, \]
with some fixed angle \( 0 < \omega < \frac{\pi}{2} \). And set some of the following constraints for the family \( \{ A(t) : t_0 \leq \frac{t^\alpha}{\alpha} \leq t_f \} \) of linear operators:

\begin{enumerate}
    \renewcommand{\theenumi}{\alph{enumi})}
    \item The domain \( D(A(t)) \) of \( \{ A(t) : t_0 \leq \frac{t^\alpha}{\alpha} \leq t_f \} \) is dense in \( Z \) and independent of \( t \), and \( A(t) \) represents a closed linear operator \label{app: a}.
    \item For every $\frac{t^\alpha}{\alpha} \in [t_0, t_f]$, the resolvent $R_\alpha(\lambda_\alpha, A(t))$ is well defined for all $\lambda_\alpha$ with $\text{Re} \, \lambda_\alpha \leq 0$, and there is a positive constant $K_\alpha$ ensuring that $\| R_\alpha(\lambda_\alpha, A(t)) \| \leq \frac{K_\alpha}{|\lambda_\alpha| + 1}$ \label{app: b}.

    \item There exist constants \( 0 < \delta \leq 1 \) and \( K > 0 \) such that \( \| (A(t) - A(s)) A^{-1} (\tau) \| \leq K|t^\alpha - s^\alpha|^\delta \) for any \( \frac{t^\alpha}{\alpha}, \frac{s^\alpha}{\alpha}, \frac{\tau^\alpha}{\alpha} \in [t_0, t_f] .\)\label{app: c}

    \item For each \( \frac{t^\alpha}{\alpha} \in [t_0, t_f] \) and some \( \lambda_\alpha \in \rho(A(t)) \), the resolvent set of \( A(t) \), denoted by \( R_\alpha(\lambda_\alpha, A(t)) \), is a compact operator \label{app: d}.
\end{enumerate}

We begin with a formal computation that will lead to the construction method of the evolution operators. Suppose that for each \( \frac{t^\alpha}{\alpha} \in [t_0, t_f] \), \(-A(t)\) is the infinitesimal generator of a \( C_0 \), \(\alpha\)-conformable semigroup \( S_t(s) \), \( s \geq 0 \), in the Hilbert space \( Z \), and is defined by

$$
S_s(t-s)=\exp{\left(-(\frac{t^\alpha}{\alpha}-\frac{s^\alpha}{\alpha})A(s)\right)} = \frac{1}{2\pi i} \int_{\Gamma_\omega} e^{-\lambda_\alpha(\frac{t^\alpha}{\alpha}- \frac{s^\alpha}{\alpha})} R_\alpha(\lambda_\alpha, A(s)) \, d\lambda_\alpha,
$$
where $\Gamma_\omega : \lambda = \rho e^{\pm i \omega}, \; 0 \leq \rho < \infty$, is an integral contour.\\
Set
\begin{equation}\label{eq:Q}
    \Psi_\alpha(t,s)=S_s(t-s)+\int_\frac{s^\alpha}{\alpha}^\frac{t^\alpha}{\alpha} S_\tau (t-\tau) R(\tau ,s) d(\tau,\alpha).
    \tag{Q*}
\end{equation}
Then(formally)\\
$$ T_\alpha\bigg(\Psi_\alpha(t,s)\bigg)(t)= -A(s)S_s(t-s)+R(t,s)-\int_\frac{s^\alpha}{\alpha}^\frac{t^\alpha}{\alpha} A(\tau)S_\tau (t-\tau) R(\tau ,s) d(\tau,\alpha), $$
and\\
\begin{equation}\label{eq:Q1}
   T_\alpha\bigg(\Psi_\alpha(t,s)\bigg)(t) + A(t)\Psi_\alpha(t, s) = R(t, s) - R_1(t, s) - \int_\frac{s^\alpha}{\alpha}^\frac{t^\alpha}{\alpha} R_1(t, \tau)R(\tau, s) \, d(\tau,\alpha) ,
\end{equation}
where,
\begin{equation}\label{eq:Q2}
    R_1(t, s) = \bigg(A(s) - A(t)\bigg)S_s(t - s).
\end{equation}
Suppose, \(\Psi_\alpha(t, s)\) is an evolution operators (that is, the solution of the homogeneous part of the equation (\ref{eq:P})), it follows from (\ref{eq:Q1}) that the integral equation
\begin{equation}\label{eq:Q3}
    R(t, s) = R_1(t, s) + \int_\frac{s^\alpha}{\alpha}^\frac{t^\alpha}{\alpha} R_1(t, \tau)R(\tau, s) \, d(\tau,\alpha),
\end{equation}
must be satisfied. The main result of this section concerns the existence and uniqueness of the evolution operators for the system (\ref{eq:P}):
\begin{theorem}\label{thm:Qth1}
Under assumptions (\ref{app: a}- (\ref{app: c}, there exists a unique evolution operator for the homogeneous part of the semi-linear system (\ref{eq:P}), denoted by \( \Psi_\alpha(t, s) \), defined on the interval \( t_0 \leq \frac{s^\alpha}{\alpha} \leq \frac{t^\alpha}{\alpha} \leq t_f \), satisfying:

\begin{itemize}
\item[\( (E_1). \)] \( \| \Psi_\alpha(t, s) \| \leq M \quad \text{for} \quad t_0 \leq \frac{s^\alpha}{\alpha} \leq \frac{t^\alpha}{\alpha} \leq t_f. \)\label{app: E1}
\item[\( (E_2). \)] For \( t_0 \leq \frac{s^\alpha}{\alpha} < \frac{t^\alpha}{\alpha} \leq t_f \), \( \Psi_\alpha(t, s) : Z \to   D(A(t)) \) and \(  \frac{t^\alpha}{\alpha} \to \Psi_\alpha(t, s) \) is strongly $\alpha-$conformable differentiable in Hilbert space Z. The derivative \( T_\alpha\bigg(\Psi_\alpha(t,s)\bigg)(t)\) is a bounded operator, and it is strongly continuous on \( t_0 \leq \frac{s^\alpha}{\alpha} < \frac{t^\alpha}{\alpha} \leq t_f \). Moreover,
\begin{equation}\label{eq:Q4}
T_\alpha\bigg(\Psi_\alpha(t,s)\bigg)(t) + A(t)\Psi_\alpha(t, s) = 0, \quad \text{for} \quad t_0 \leq \frac{s^\alpha}{\alpha} < \frac{t^\alpha}{\alpha} \leq t_f, 
\end{equation}
and
\begin{equation}\label{eq:Q5}
\left\|T_\alpha\bigg(\Psi_\alpha(t,s)\bigg)(t) \right\| = \| A(t)\Psi_\alpha(t, s) \| \leq \frac{M}{t^\alpha - s^\alpha}.
\end{equation}\label{app: E2}

\item[\( (E_3). \)]  For every  $v \in  D(A(t))$  and $\frac{t^\alpha}{\alpha} \in ]t_0, t_f]$, $ \Psi_\alpha(t, s)v$ is $\alpha-$ conformable differentiable with respect to $s$ on,  \( t_0 \leq  \frac{s^\alpha}{\alpha}  \leq \frac{t^\alpha}{\alpha} \leq t_f \)  and
\begin{equation}\label{eq:Q7*}
    T_\alpha\bigg(\Psi_\alpha(t,s)\bigg)(s)v = \Psi_\alpha(t, s)A(s)v.
\end{equation}
\label{app: E3}
\end{itemize}
\end{theorem}
The proof of Theorem \ref{thm:Qth1} will occupy the majority of this section and is divided into three main parts. First, we construct \( \Psi_\alpha(t, s) \) by solving the integral equation (\ref{eq:Q3}) and defining \( \Psi_\alpha(t, s) \) using (\ref{eq:Q}). Second, we prove that \( \Psi_\alpha(t, s) \) satisfies the properties specified in $(E_2)$. Finally, we establish the uniqueness of \( \Psi_\alpha(t, s) \) and demonstrate the relation \( \Psi_\alpha(t, s) = \Psi_\alpha(t, r)\Psi_\alpha(r, s) \) for \( t_0 \leq \frac{s^\alpha}{\alpha} \leq \frac{r^\alpha}{\alpha} \leq \frac{t^\alpha}{\alpha} \leq t_f \), while also proving $(E_3)$.

Before presenting the proof, we derive some immediate consequences of assumptions ( \ref{app: a}- (\ref{app: c}. In particular, assumption (\ref{app: b} and the denseness of $D(A(t))$ in $Z$ imply that for every $\frac{t^\alpha}{\alpha} \in [t_0, t_f]$, the operator $-A(t)$ serves as the infinitesimal generator of an analytic semigroup $S_t(s)$, $s \geq 0$, which satisfies the following bound (see \cite{bib34}):
\begin{equation}\label{eq:S}
    \|S_t(s)\| \leq M, \quad \text{for } s \geq 0.
\end{equation}

and
\begin{equation}\label{eq:A(t)S(t)}
    \|A(t)S_t(s)\| \leq \frac{M}{s^\alpha}, \quad \text{for } s > 0.
\end{equation}

\begin{lemma}\label{lemma:Q1}
    Let \( (\ref{app: a} - (\ref{app: c} \) be satisfied, then
     \begin{equation}\label{eq:Q7}
\| (A(t_3) - A(t_4))S_\tau(s) \| \leq \frac{M}{s^\alpha} |t_3^\alpha - t_4^\alpha|^\delta \quad \text{for }  \frac{s^\alpha}{\alpha} \in ]t_0, t_f], \, \frac{t_3^\alpha}{\alpha}, \frac{t_4^\alpha}{\alpha} \in [t_0, t_f].
\end{equation}

Moreover, \( A(t)S_\tau(s) \in BL(Z) \) for \( \frac{s^\alpha}{\alpha} \in ]t_0, t_f] \), \( \frac{\tau^\alpha}{\alpha} \in [t_0, t_f] \), and \( \frac{t^\alpha}{\alpha} \in [t_0, t_f] \). Furthermore, the \( BL(Z) \)-valued function \( A(t)S_\tau(s) \) is uniformly continuous in the uniform operator topology for \( \frac{s^\alpha}{\alpha} \in [\epsilon, t_f] \), \( \frac{\tau^\alpha}{\alpha} \in [t_0, t_f] \), and \( \frac{t^\alpha}{\alpha} \in [t_0, t_f] \) for every \( \epsilon > 0 \).

\end{lemma}
\begin{proof}
    Since \( S_{\tau}(s): Z \to D(A(t)) \) for \( s > 0 \), and from (\ref{eq:A(t)S(t)}), it follows by the closed graph theorem that \( A(t) S_\tau(s) \) is a bounded linear operator for \( \frac{t^\alpha}{\alpha}, \frac{\tau^\alpha}{\alpha} \in [t_0, t_f] \) and \( \frac{s^\alpha}{\alpha} \in ]t_0, t_f] \). Moreover, using (\ref{eq:A(t)S(t)}) and assumption (\ref{app: c}, we have

     \begin{align*}
         \lVert (A(t_3)-A(t_4))S_\tau (s) \rVert &\leq \lVert (A(t_3)-A(t_4)) A(\tau)^{-1} \rVert \quad \lVert A(\tau) S_\tau (s)\rVert\\
         & \leq \frac{M}{s^\alpha}|t_4 -t_3|^\delta.
     \end{align*}
\end{proof}
\begin{corollary}\label{corollary:Q1}
   The operator \( R_1(t, s) \), as defined in equation (\ref{eq:Q2}), is uniformly continuous in the uniform operator topology for \( t_0 \leq \frac{s^\alpha}{\alpha} \leq \frac{t^\alpha}{\alpha} - \epsilon \leq t_f \), for every \( \epsilon > 0 \) and

\begin{equation}\label{eq:Q10}
\| R_1(t, s) \| \leq M (t^\alpha - s^\alpha)^{\delta - 1} , \quad \text{for } \quad t_0 \leq  \frac{s^\alpha}{\alpha} <  \frac{t^\alpha}{\alpha} \leq t_f.
\end{equation}
\end{corollary}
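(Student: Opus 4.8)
The plan is to read off both assertions directly from Lemma \ref{lemma:Q1}, handling the norm bound \eqref{eq:Q10} and the uniform continuity as two independent specializations, so that no new estimate has to be produced.

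For the bound \eqref{eq:Q10} I would specialize inequality \eqref{eq:Q7}. Writing $R_1(t,s) = (A(s) - A(t))S_s(t-s)$, I identify $t_1 = s$, $t_2 = t$ and take the generator index $\tau = s$. The operator $S_s(t-s) = \exp\!\big(-\tfrac{t^\alpha - s^\alpha}{\alpha}A(s)\big)$ carries elapsed conformable time $\tfrac{t^\alpha - s^\alpha}{\alpha}$; writing this elapsed time as $\tfrac{w^\alpha}{\alpha}$ with $w^\alpha = t^\alpha - s^\alpha$, the prefactor $\tfrac{M}{s^\alpha}$ occurring in \eqref{eq:Q7} (where the denominator is the $\alpha$-power of the semigroup's time argument) becomes $\tfrac{M}{w^\alpha} = \tfrac{M}{t^\alpha - s^\alpha}$. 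Since $\tfrac{s^\alpha}{\alpha} < \tfrac{t^\alpha}{\alpha}$ we have $|t_1^\alpha - t_2^\alpha|^\delta = (t^\alpha - s^\alpha)^\delta$, and \eqref{eq:Q7} then gives
\[
\|R_1(t,s)\| \leq \frac{M}{t^\alpha - s^\alpha}\,(t^\alpha - s^\alpha)^\delta = M\,(t^\alpha - s^\alpha)^{\delta - 1},
\]
which is exactly \eqref{eq:Q10}; the singular diagonal $t=s$ is harmlessly excluded because the estimate is only claimed on $t_0 \leq \tfrac{s^\alpha}{\alpha} < \tfrac{t^\alpha}{\alpha} \leq t_f$.

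For the uniform continuity I would split $R_1$ into two pieces each governed by the \emph{Moreover} clause of Lemma \ref{lemma:Q1}, namely
\[
R_1(t,s) = A(s)S_s(t-s) - A(t)S_s(t-s).
\]
Both summands are values of the $BL(Z)$-valued map $(t,\tau,w)\mapsto A(t)S_\tau(w)$ — with $(t,\tau)=(s,s)$ and $(t,\tau)=(t,s)$ respectively — which the lemma declares uniformly continuous in the uniform operator topology provided the semigroup time argument is bounded below by a fixed positive number. On the region $t_0 \leq \tfrac{s^\alpha}{\alpha} \leq \tfrac{t^\alpha}{\alpha} - \epsilon \leq t_f$ the elapsed time obeys $\tfrac{t^\alpha - s^\alpha}{\alpha} \geq \epsilon$, so that lower bound holds; moreover $(t,s)\mapsto w = (t^\alpha - s^\alpha)^{1/\alpha}$ is itself uniformly continuous on the corresponding compact set. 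Composing a uniformly continuous map with uniformly continuous inputs, each summand is uniformly continuous in $(t,s)$, and hence so is their difference $R_1(t,s)$.

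The main obstacle — indeed essentially the only delicate point — is the bookkeeping that matches the corollary's shorthand $S_s(t-s)$ against the time variable used in Lemma \ref{lemma:Q1}: since $S_s(t-s)$ denotes $\exp\!\big(-\tfrac{t^\alpha-s^\alpha}{\alpha}A(s)\big)$ rather than a literal evaluation at $t-s$, one must consistently read its effective argument as $\tfrac{t^\alpha - s^\alpha}{\alpha}$. Once this identification is fixed, one checks that the constraint $\tfrac{t^\alpha}{\alpha} - \epsilon \geq \tfrac{s^\alpha}{\alpha}$ is precisely what forces the elapsed time away from $0$, legitimizing both the invocation of the \emph{Moreover} clause and the membership $A(s)S_s(t-s)\in BL(Z)$ uniformly over the region; thereafter both claims are immediate substitutions into the lemma.
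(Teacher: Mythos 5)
Your proof is correct. For the uniform-continuity half it coincides with the paper's argument: both reduce the claim to the \emph{Moreover} clause of Lemma \ref{lemma:Q1}, namely the uniform continuity of $A(t)S_\tau(\cdot)$ in the uniform operator topology once the semigroup's time argument is bounded away from zero, which is exactly what the constraint $\frac{s^\alpha}{\alpha}\leq\frac{t^\alpha}{\alpha}-\epsilon$ guarantees. For the bound \eqref{eq:Q10} you take a mildly different route: you read it off as a specialization of inequality \eqref{eq:Q7}, with $t_1=s$, $t_2=t$, generator index $s$, and effective elapsed time $\frac{t^\alpha-s^\alpha}{\alpha}$, whereas the paper factors $R_1(t,s)=\bigl[(A(s)-A(t))A(s)^{-1}\bigr]\bigl[A(s)S_s(t-s)\bigr]$ and multiplies the H\"older estimate of assumption (c) by the analytic-semigroup bound $\| A(s)S_s(t-s)\|\leq M(t^\alpha-s^\alpha)^{-1}$. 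The two derivations are essentially equivalent, since \eqref{eq:Q7} is itself obtained from that very factorization; yours uses Lemma \ref{lemma:Q1} as a black box and gains economy, while the paper's version makes explicit where assumption (c) enters and does not depend on the particular indexing conventions of \eqref{eq:Q7}. Your bookkeeping of $S_s(t-s)$ versus its effective argument $\frac{t^\alpha-s^\alpha}{\alpha}$ is warranted and resolves a genuine notational ambiguity in the paper; no gap results from it.
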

\begin{proof}
 The first part of the claim directly follows from the uniform continuity of \( A(t)S_t(s) \) in \( BL(Z) \), while equation (\ref{eq:Q10}) follows from the estimate
 \begin{align*}
    \| R_1(t, s) \| &\leq \| (A(t) - A(s))A(s)^{-1} \| \| A(s)S_s(t - s) \| \\
    & \leq M |t^\alpha - s^\alpha|^{\delta} |t^\alpha - s^\alpha|^{-1}\\
    &  = M |t^\alpha - s^\alpha|^{\delta - 1}.
 \end{align*}

\end{proof}

The proof of the first part of Theorem \ref{thm:Qth1} is provided below:\\

By Corollary \ref{corollary:Q1} and  the integral equation (\ref{eq:Q3}) can be solved by successive approximation:
\begin{equation}\label{eq:Q11}
R(t, s) = \sum_{m=1}^{\infty} R_m(t, s), 
\end{equation}
\begin{equation}\label{eq:Q12}
R_m(t, s) = \int_{\frac{s^\alpha}{\alpha}}^{\frac{t^\alpha}{\alpha}} R_{1}(t, \sigma)R_{m-1}(\sigma,s)  d(\sigma,\alpha), \quad m = 2, 3, \dots 
\end{equation}

 \begin{lemma}\label{lemma:Q2}
The operator \( R(t, s) \), as defined in (\ref{eq:Q3}), is continuous in \( t_0 \leq \frac{s^\alpha}{\alpha} < \frac{t^\alpha}{\alpha} \leq t_f \) in the uniform operator topology and satisfies

\begin{equation}\label{eq:Q13}
\|R(t, s)\| \leq M|t^\alpha - s^\alpha|^{\delta-1}.
\end{equation}
 \end{lemma}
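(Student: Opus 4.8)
The plan is to use the successive-approximation series (\ref{eq:Q11})--(\ref{eq:Q12}) and to show that the iterates $R_m$ obey weakly singular bounds of the form $\|R_m(t,s)\|\le a_m\,(t^\alpha-s^\alpha)^{m\delta-1}$, whose singularity at the diagonal weakens as $m$ grows and whose constants $a_m$ decay fast enough in $m$ for the series to converge absolutely in the uniform operator topology off the diagonal. The two ingredients are the estimate $\|R_1(t,s)\|\le M(t^\alpha-s^\alpha)^{\delta-1}$ of Corollary \ref{corollary:Q1} and the reduction of the conformable convolution in (\ref{eq:Q12}) to a classical Beta integral through the substitution $w=\tau^\alpha/\alpha$, for which $d(\tau,\alpha)=dw$ and the conformable integral becomes an ordinary Riemann integral.

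First I would establish the bound on the iterates by induction on $m$. The case $m=1$ is Corollary \ref{corollary:Q1}. For the inductive step, inserting $\|R_1(t,\sigma)\|\le M(t^\alpha-\sigma^\alpha)^{\delta-1}$ and $\|R_{m-1}(\sigma,s)\|\le a_{m-1}(\sigma^\alpha-s^\alpha)^{(m-1)\delta-1}$ into (\ref{eq:Q12}) and substituting $w=\sigma^\alpha/\alpha$ reduces the kernel integral to
\[
\int_{s^\alpha/\alpha}^{t^\alpha/\alpha}(t^\alpha-\sigma^\alpha)^{\delta-1}(\sigma^\alpha-s^\alpha)^{(m-1)\delta-1}\,d(\sigma,\alpha)=\frac{1}{\alpha}(t^\alpha-s^\alpha)^{m\delta-1}\,B\big(\delta,(m-1)\delta\big),
\]
where $B(\delta,(m-1)\delta)=\Gamma(\delta)\Gamma((m-1)\delta)/\Gamma(m\delta)$. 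This yields the recursion $a_m=\tfrac{M\Gamma(\delta)}{\alpha}\,\tfrac{\Gamma((m-1)\delta)}{\Gamma(m\delta)}\,a_{m-1}$ with $a_1=M$, which telescopes to
\[
a_m=M\Big(\tfrac{M\Gamma(\delta)}{\alpha}\Big)^{m-1}\frac{\Gamma(\delta)}{\Gamma(m\delta)}.
\]

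Next I would sum the series. Since $\Gamma(m\delta)$ grows faster than any geometric factor and $t^\alpha-s^\alpha$ is bounded on the domain by $T:=t_f^\alpha-t_0^\alpha$, the series $\sum_m a_m (t^\alpha-s^\alpha)^{m\delta-1}$ converges, and in fact converges uniformly on every region $\{t^\alpha-s^\alpha\ge\eta\}$ with $\eta>0$. By Corollary \ref{corollary:Q1} the kernel $R_1$ is uniformly continuous in the uniform operator topology off the diagonal, and each convolution $R_m$ inherits this continuity; a locally uniform limit of continuous $BL(Z)$-valued functions is continuous, so $R(t,s)=\sum_m R_m(t,s)$ is continuous on $t_0\le s^\alpha/\alpha<t^\alpha/\alpha\le t_f$ and solves (\ref{eq:Q3}) term by term. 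Factoring $(t^\alpha-s^\alpha)^{\delta-1}$ out of the summed bound gives
\[
\|R(t,s)\|\le (t^\alpha-s^\alpha)^{\delta-1}\sum_{m=1}^{\infty} a_m\,(t^\alpha-s^\alpha)^{(m-1)\delta}\le (t^\alpha-s^\alpha)^{\delta-1}\sum_{m=1}^{\infty} a_m\,\max(1,T)^{(m-1)\delta},
\]
and the last series is a finite absolute constant, which I would rename $M$, yielding (\ref{eq:Q13}).

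The main obstacle is the inductive estimate: one must carry the Gamma-function factors through correctly so that the product telescopes to $\Gamma(\delta)/\Gamma(m\delta)$, and then verify that this $1/\Gamma(m\delta)$ decay genuinely dominates the geometric growth $(M\Gamma(\delta)/\alpha)^{m-1}$ together with the powers $\max(1,T)^{(m-1)\delta}$, uniformly over the domain. A secondary technical point is justifying that the conformable convolution reduces \emph{exactly} to the classical Beta integral under $w=\tau^\alpha/\alpha$ (equivalently, invoking the conformable Beta function of the preliminaries); once this is in place, the convergence and the continuity of the sum follow routinely, and the reuse of the symbol $M$ for the new absolute constant is the only mild abuse of notation.
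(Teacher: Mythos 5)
Your proposal is correct and follows essentially the same route as the paper: solve (\ref{eq:Q3}) by the successive approximations (\ref{eq:Q11})--(\ref{eq:Q12}), prove by induction a bound $\|R_m(t,s)\|\le a_m (t^\alpha-s^\alpha)^{m\delta-1}$ with $a_m$ decaying like $1/\Gamma(m\delta)$ so that the series converges uniformly off the diagonal, and conclude continuity and (\ref{eq:Q13}) from Corollary \ref{corollary:Q1}. The only (immaterial) difference is that you reduce the conformable convolution to the classical Beta function, obtaining the factor $\Gamma(\delta)/\Gamma(m\delta)$, whereas the paper's stated bound (\ref{eq:Q14}) carries the shifted factors $\Gamma(\delta+\alpha-1)/\Gamma(m\delta+\alpha-1)$ coming from its conformable Beta/Gamma identities; both yield the same convergence and the same final estimate.
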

       
    \begin{proof}
       This follows from
\begin{equation}\label{eq:Q14}
\| R_m(t, s) \| \leq\frac{M^m}{\alpha^{m-1}}  \frac{(\Gamma(\delta))^{m-1} \Gamma(\delta+\alpha-1)}{\Gamma(m\delta+\alpha-1)} (t^\alpha - s^\alpha)^{m \delta - 1},
\end{equation}
where $M$ is the same constant as in  (\ref{eq:Q10}) and from the preceding corollary \ref{corollary:Q1}.
   \end{proof}

\(\Psi_\alpha(t, s)\) defined by (\ref{eq:Q}) follows readily from the strong continuity of \(S_s(\tau)\), (\ref{eq:S}), and (\ref{eq:Q13}) that \(\Psi_\alpha(t, s)\) is strongly continuous for \(t_0 \leq \frac{s^\alpha}{\alpha} < \frac{t^\alpha}{\alpha} \leq t_f\), and that

\begin{equation}\label{Eq:equationQ1}
    \|\Psi_\alpha(t, s)\| \leq M.
\end{equation}

The proof of the second part of Theorem \ref{thm:Qth1} is provided below:\\
For sufficiently small positive $h$, we define

\begin{equation}\label{eq:Q15}
\Psi_{\alpha,h}(t, s) = \exp \left( - (\frac{t^\alpha}{\alpha}-\frac{s^\alpha}{\alpha})A(s) \right) + \int_ \frac{s^\alpha}{\alpha}^{ \frac{t^\alpha}{\alpha}-\frac{h^\alpha}{\alpha}}\exp \left( - (\frac{t^\alpha}{\alpha}-\frac{\tau^\alpha}{\alpha})A(\tau) \right) R(\tau, s) d(\tau,\alpha).
\end{equation}

Then we have,
$$
T_\alpha \bigg(\Psi_{\alpha,h}(t, s)\bigg)(t) + A(t)\Psi_{\alpha,h}(t, s) $$ $$
= -R_1(t, s) + \exp \left( - \frac{h^\alpha}{\alpha} A(s) \right) R(t - h, s) - \int_{ \frac{s^\alpha}{\alpha}}^{ \frac{t^\alpha}{\alpha}-\frac{h^\alpha}{\alpha}} R_1(t, \tau)R(\tau, s) d(\tau,\alpha).
$$

The  right side member is uniformly bound in $h$ and then
\begin{equation}\label{eq:Q16}
T_\alpha\bigg(\Psi_{\alpha,h}(t, s)\bigg)(t) + A(t)\Psi_{\alpha,h}(t, s) \to 0, \quad \text{strongly as } h \to 0,
\end{equation}

That is, 
$$ T_\alpha \bigg(\Psi_\alpha(t, s)\bigg)(t) + A(t)\Psi_\alpha(t, s) = 0, \quad \text{for} \quad t > s. $$

This concludes the proofs of (\ref{eq:Q4}) and (\ref{eq:Q5}).

The proof of the third part of Theorem \ref{thm:Qth1} is provided below:\\
To prove it  $(E_3)$, we construct an operator-valued function \( \Phi_\alpha(t, s) \) that satisfies

\begin{equation}\label{eq:Q17}
    \begin{cases}
T_\alpha \bigg(\Phi_\alpha(t, s)\bigg)(s) v = \Phi_\alpha(t, s) A(s) v, & \text{for } t_0 \leq \frac{s^\alpha}{\alpha} \leq \frac{t^\alpha}{\alpha} \leq t_f, \, v \in   D(A(t)),\\
\Phi_\alpha(t, t) = I,
\end{cases}
\end{equation}
Later, we will prove that \( \Phi_\alpha(t, s) = \Psi_\alpha(t, s) \). The construction of \( \Phi_\alpha(t, s) \) follows the same procedure as the construction of \( \Psi_\alpha(t, s) \) outlined above. We set

\begin{align*}
    Q_1(t, s)& = T_\alpha \bigg(S_s(t - s)\bigg)(t)+T_\alpha \bigg(S_s(t - s)\bigg)(s) \\
    &= \frac{1}{2 \pi i} \int_{\Gamma_w} e^{-\lambda_\alpha (\frac{t^\alpha}{\alpha} - \frac{s^\alpha}{\alpha})} T_\alpha \bigg( R_\alpha(\lambda_\alpha : A(s))\bigg)(s) \, d\lambda_\alpha.
\end{align*}

Using assumption (\ref{app: b} and the estimation of  $Q_1(t,s)$ , we find

\[
\| Q_1(t, s) \| = \left\| \frac{1}{2 \pi i} \int_{\Gamma_w} e^{-\lambda_\alpha (\frac{t^\alpha}{\alpha} - \frac{s^\alpha}{\alpha})} \quad T_\alpha \bigg( R_\alpha(\lambda_\alpha : A(s))\bigg) (s)\, d\lambda_\alpha \right\| \leq M.
\]
Next, we solve the integral equation  by successive approximations.
\begin{equation}\label{eq:Q18}
    Q(t, s) = Q_1(t, s) + \int_ \frac{s^\alpha}{\alpha}^ \frac{t^\alpha}{\alpha} Q(t, \tau) Q_1(\tau, s) \, d(\tau,\alpha).
\end{equation}

This is carried out in the same manner as solving the integral equation (\ref{eq:Q3}). Given that \( Q_1(t, s) \) is uniformly bounded in this case, the solution \( Q(t, s) \) of (\ref{eq:Q18}) will satisfy this condition.
\[
\| Q(t, s) \| \leq M.
\]
Setting
\begin{equation}\label{eq:phi_alpha}
    \Phi_\alpha(t, s) = S_s(t - s) + \int_ \frac{s^\alpha}{\alpha}^ \frac{t^\alpha}{\alpha} Q(t, \tau) S_s(\tau - s) \, d(\tau,\alpha).
\end{equation}

We have to show that \( \| \Phi_\alpha(t, s) \| \leq M \), and for \( v \in  D(A(t)) \), \( \Phi_\alpha(t, s) v \) is \(\alpha\)-conformably differentiable with respect to \( s \). Differentiating \( \Phi_\alpha(t, s) v \) with respect to \( s \) in the \(\alpha\)-conformable sense yields
\begin{align*}
   & T_\alpha \bigg(\Phi_\alpha(t, s)\bigg)(s) v - \Phi_\alpha(t, s) A(s) v\\
& = Q_1(t, s) v + \int_{\frac{s^\alpha}{\alpha}}^{\frac{t^\alpha}{\alpha}} Q(t, \tau) Q_1(\tau, s) v \, d(\tau,\alpha) - Q(t, s) v\\
\end{align*}
From (\ref{eq:Q18}),
\begin{align*}
     T_\alpha \bigg(\Phi_\alpha(t, s)\bigg)(s) v - \Phi_\alpha(t, s) A(s) v
& = 0.
\end{align*}
By the definition of \( \Phi_\alpha(t, s) \), it follows that \( \Phi_\alpha(t, t) = I \), and hence \( \Phi_\alpha(t, s) \) is a solution to (\ref{eq:Q17}).

Consider \( z_0 \in Z \) and \( \frac{s^\alpha}{\alpha} < \frac{r^\alpha}{\alpha} < \frac{t^\alpha}{\alpha} \), the function \( \frac{r^\alpha}{\alpha} \to \Phi_\alpha(t, r)\Psi_\alpha(r, s)z_0 \) is \( \alpha \)-conformable differentiable in \( r \). Moreover, we have
\begin{align*}
    T_\alpha \bigg(\Phi_\alpha(t, r)\Psi_\alpha(r, s)\bigg)(r)z_0 & = \Phi_\alpha(t, r)A(r)\Psi_\alpha(r, s)z_0 - \Phi_\alpha(t, r)A(r)\Psi_\alpha(r, s)z_0 \\
    &= 0.
\end{align*}

This indicates that \( \Phi_\alpha(t, r)\Psi_\alpha(r, s)z_0 \) is independent of \( r^\alpha \) for \( s^\alpha < r^\alpha < t^\alpha \). By taking the limits \( r^\alpha \downarrow s^\alpha \) and \( r^\alpha \uparrow t^\alpha \), we obtain \( \Phi_\alpha(t, s)z_0 = \Psi_\alpha(t, s)z_0 \) for every \( z_0 \in Z \). Consequently, \( \Psi_\alpha(t, s) = \Phi_\alpha(t, s) \), and \( \Psi_\alpha(t, s) \) satisfies 

\begin{equation}\label{eq:Q19}
    T_\alpha \big(\Psi_\alpha(t, s)\big)(s) v = \Psi_\alpha(t, s) A(s) v, \quad \text{for } v \in D(A(t)),
\end{equation}

as required.

To complete the proof of Theorem \ref{thm:Qth1}, it remains to demonstrate the uniqueness of $\Psi_\alpha(t,s)$ and to verify that it satisfies the property $\Psi_\alpha(t,s) = \Psi_\alpha(t,r)\Psi_\alpha(r,s)$ for $t_0 \leq \frac{s^\alpha}{\alpha} \leq \frac{r^\alpha}{\alpha} \leq \frac{t^\alpha}{\alpha} \leq t_f.$

Both these claims follow from: 

\begin{theorem}\label{thm:thQ2}

    Let \(A(t)\), with \(t_0 \leq \frac{t^\alpha}{\alpha} \leq t_f\), satisfy the conditions \((\ref{app: a}-(\ref{app: c}\). For every \(t_0 \leq \zeta < \frac{t^\alpha}{\alpha} \leq t_f\) and \(z_0 \in Z\), the initial value problem
\begin{equation}\label{eq:Q20}
\begin{cases}
(T_\alpha u)(t) + A(t)u(t) = 0,  \quad \alpha \in (0,1] ,   \\
u(\zeta) = z_0, \quad  t \in [t_1,t_2], \quad \text{and} \quad \zeta \in [t_0,t_f] \cap [t_1,t_2],
\end{cases}
\end{equation}
has a unique solution \(u(t)\) given by \(u(t) = \Psi_\alpha(t,\zeta)z_0\), where \(\Psi_\alpha(t,s)\) is the evolution operator constructed above.
\end{theorem}
\begin{proof}
   From (\ref{eq:Q4}), it shows that \(u(t) = \Psi_\alpha(t,s)z_0\) is a solution of homogeneous system (\ref{eq:Q20}). To prove its uniqueness, suppose that \(v(t)\) is another solution of a homogeneous system (\ref{eq:Q20}). Since \(v(r) \in D(A(r))\) for every \(r > s\), it follows from (\ref{eq:Q7*}) and (\ref{eq:Q20}) that the function \(r \mapsto \Psi_\alpha(t,r)v(r)\) is $\alpha-$  conformable differentiable with respect to $r$, and  
\[
T_\alpha \bigg(\Psi_\alpha(t,r)v(r)\bigg)(r) = \Psi_\alpha(t,r)A(r)v(r) - \Psi_\alpha(t,r)A(r)v(r) = 0.
\]
It implies that, \(\Psi_\alpha(t,r)v(r)\) is constant on \(s^\alpha < r^\alpha < t^\alpha\). Since this is continuous on \(s^\alpha \leq r^\alpha \leq t^\alpha\), so we can take limit \(r^\alpha \to t^\alpha\) and \(r^\alpha \to s^\alpha\), to obtain \(\Psi_\alpha(t,s)z_0 = v(t)\). The uniqueness of the solution of (\ref{eq:Q20}) then follows.  
\end{proof}

From Theorem \ref{thm:thQ2}, it shows that for \(z_0 \in Z\),  
\[
\Psi_\alpha(t,s)z_0 = \Psi_\alpha(t,r)\Psi_\alpha(r,s)z_0 \quad \text{for} \quad t_0 \leq \frac{s^\alpha}{\alpha} \leq \frac{t^\alpha}{\alpha} \leq t_f,
\]
and \(\Psi_\alpha(t,s)\) is thus an evolution operator satisfying  $(E_1)$, $(E_2)$, and $(E_3)$. If \(\Phi_\alpha(t,s)\) is an evolution operator that satisfies  $(E_1)$ and $(E_2)$, then \(\Phi_\alpha(t,s)z_0\) is a solution of (\ref{eq:Q20}). Furthermore, from Theorem \ref{thm:thQ2}, it follows that  
\[
\Phi_\alpha(t,s)z_0 = \Psi_\alpha(t,s)z_0 \implies \Phi_\alpha(t,s) = \Psi_\alpha(t,s),
\]
which is the unique evolution operator satisfying $(E_1)$, $(E_2)$, and  $(E_3)$. This concludes the proof of Theorem \ref{thm:Qth1}.

\begin{remark}
   From the above theorems \ref{thm:Qth1} and \ref{thm:thQ2}, it is clear that the evolution operator $\Psi_\alpha(t,s)$ holds for any $0 < \alpha < \infty$.
\end{remark}

\begin{lemma}\label{lemma:Q3}
    The family of operators \(\{\Psi_\alpha(t, s) :  \frac{t^\alpha}{\alpha} > \frac{s^\alpha}{\alpha} \geq t_0\}\) is continuous in \(t\), in the uniform operator topology uniformly for \(s\).
\end{lemma}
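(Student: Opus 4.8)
The plan is to prove the assertion directly from the integral representation (\ref{eq:Q}), rather than from the merely strong continuity recorded in Theorem \ref{thm:Qth1}. Writing $\Psi_\alpha(t,s)=S_s(t-s)+\int_{s^\alpha/\alpha}^{t^\alpha/\alpha}S_\tau(t-\tau)R(\tau,s)\,d(\tau,\alpha)$, I would estimate $\|\Psi_\alpha(t+h,s)-\Psi_\alpha(t,s)\|$ as $h\downarrow 0$ (the case $h\uparrow 0$ being symmetric after exchanging the two time levels) and show it tends to $0$ with a modulus independent of $s$. The difference splits into a semigroup contribution $S_s(t+h-s)-S_s(t-s)$ and an integral remainder, each of which is controlled by the analyticity estimate underlying assumption (b) together with the singular bound (\ref{eq:Q13}) on $R$ and the continuity statement of Lemma \ref{lemma:Q1}.

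For the semigroup part I would use $\tfrac{\partial^\alpha}{\partial\sigma^\alpha}S_s(\sigma-s)=-A(s)S_s(\sigma-s)$ to write the difference as a conformable integral of the derivative, and bound the integrand by the analytic estimate $\|A(s)S_s(\sigma-s)\|\le M/(\sigma^\alpha-s^\alpha)$. Passing to the variable $\xi=\sigma^\alpha/\alpha$ reduces this to $\tfrac{M}{\alpha}\log\frac{(t+h)^\alpha-s^\alpha}{t^\alpha-s^\alpha}$, which vanishes as $h\downarrow 0$ whenever $t^\alpha>s^\alpha$. The integral remainder I would decompose as $I_h+II_h$, where $I_h=\int_{t^\alpha/\alpha}^{(t+h)^\alpha/\alpha}S_\tau(t+h-\tau)R(\tau,s)\,d(\tau,\alpha)$ is the new slab and $II_h=\int_{s^\alpha/\alpha}^{t^\alpha/\alpha}\bigl[S_\tau(t+h-\tau)-S_\tau(t-\tau)\bigr]R(\tau,s)\,d(\tau,\alpha)$ is the common part. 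On the slab $\tau\in[t,t+h]$ the kernel is non-singular, so $\|R(\tau,s)\|\le M(t^\alpha-s^\alpha)^{\delta-1}$ by (\ref{eq:Q13}) and $\|S_\tau\|\le M$ give $\|I_h\|=O(\text{slab length})\to 0$.

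The term $II_h$ is the main obstacle, because it carries two singularities sitting at opposite endpoints: $\|R(\tau,s)\|\le M(\tau^\alpha-s^\alpha)^{\delta-1}$ blows up (but integrably, as $\delta-1>-1$) at the lower limit $\tau=s$, while the semigroup difference $\|S_\tau(t+h-\tau)-S_\tau(t-\tau)\|$ degenerates at the upper limit $\tau=t$, where the logarithmic bound from the preceding paragraph does not go to zero uniformly. The plan is to split $II_h$ at $\tau^\alpha/\alpha=t^\alpha/\alpha-\eta$: on $[t-\eta,t]$ I discard the semigroup difference by the crude bound $2M$ and use the boundedness of $R$ there to obtain a contribution controlled by $\eta$; on $[s,t-\eta]$ the semigroup time $t^\alpha/\alpha-\tau^\alpha/\alpha\ge\eta$ is bounded away from $0$, so the operator-norm continuity of the analytic semigroup in its time argument forces $\|S_\tau(t+h-\tau)-S_\tau(t-\tau)\|\to 0$ uniformly in $\tau$, and this is integrated against the fixed integrable majorant $M(\tau^\alpha-s^\alpha)^{\delta-1}$. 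Choosing $\eta$ small and then $h$ small makes $\|II_h\|$ arbitrarily small.

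Combining the three estimates gives $\|\Psi_\alpha(t+h,s)-\Psi_\alpha(t,s)\|\to 0$, which is the claimed continuity of $\tfrac{t^\alpha}{\alpha}\mapsto\Psi_\alpha(t,s)$ in the uniform operator topology. The one place where uniformity in $s$ is genuinely delicate is the logarithmic term of the semigroup step together with the $\eta$-slab, both of which deteriorate as $s^\alpha\uparrow t^\alpha$; the resulting modulus of continuity is uniform for $s^\alpha$ ranging over any region $\{t^\alpha-s^\alpha\ge\epsilon\}$, and it is precisely this diagonal behaviour that represents the hard part of the argument.
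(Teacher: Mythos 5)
The paper states Lemma \ref{lemma:Q3} without any proof, so there is nothing to compare line by line; your argument has to be judged on its own. It is correct in substance: the decomposition into the semigroup difference, the new slab $I_h$, and the common part $II_h$, with the further split of $II_h$ at $t^\alpha/\alpha-\eta$ to separate the singularity of $R(\tau,s)$ at $\tau=s$ from the degeneracy of the semigroup difference at $\tau=t$, is the standard parabolic argument and each estimate you invoke ((\ref{eq:Q13}), the bound $\|A(s)S_s(\sigma-s)\|\le M/(\sigma^\alpha-s^\alpha)$, and the uniform sectorial estimates from assumption (b)) is available in the paper. However, you have made the proof far longer than it needs to be by dismissing Theorem \ref{thm:Qth1} as recording ``merely strong continuity.'' Property $(E_2)$ asserts more: $(\partial^\alpha/\partial t^\alpha)\Psi_\alpha(t,s)$ is a \emph{bounded} operator satisfying the norm estimate (\ref{eq:Q5}), $\|(\partial^\alpha/\partial t^\alpha)\Psi_\alpha(t,s)\|\le M/(t^\alpha-s^\alpha)$. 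Integrating this in the variable $t^\alpha/\alpha$ gives
\begin{equation*}
\|\Psi_\alpha(t+h,s)-\Psi_\alpha(t,s)\|\le \frac{M}{\alpha}\log\frac{(t+h)^\alpha-s^\alpha}{t^\alpha-s^\alpha}\le \frac{M}{\alpha}\,\frac{(t+h)^\alpha-t^\alpha}{t^\alpha-s^\alpha},
\end{equation*}
which is exactly the logarithmic bound you derive for the semigroup piece alone, but now for all of $\Psi_\alpha$ at once; the entire analysis of $I_h$ and $II_h$ becomes unnecessary.

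Your closing caveat is right and worth keeping: any proof (including the two-line one above) yields a modulus of continuity that degenerates as $s^\alpha\uparrow t^\alpha$, so the uniformity in $s^\alpha$ can only hold on regions $t^\alpha-s^\alpha\ge\epsilon$. For unbounded $A(t)$ the lemma as literally stated cannot hold up to the diagonal, since $\Psi_\alpha(t,s)\to I$ only strongly there. This is a defect of the paper's formulation, not of your argument, but you should state explicitly that you are proving the (correct) $\epsilon$-away-from-the-diagonal version rather than leaving it as an aside.
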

In exploring the exact null controllability of the semi-linear system (\ref{eq:P}), we examine the associated linear system:
\[
\begin{cases}
(T_{\alpha}z)(t) + A(t)z(t) = Bu(t) + h(t), & t \in [t_1,t_2], \quad \alpha \in (0,1], \\
z(\zeta) = z_0, & \zeta \in [t_0,t_f] \cap [t_1,t_2].
\end{cases}
\label{eq:Z}
\tag{R}
\]

This system is associated with equation (\ref{eq:P}), where there must exist constants \( 0 \leq \xi < \min\{\alpha, \delta\} \) such that \( h \in L^{\frac{1}{\xi}}([\zeta, t_2]; Z) \).\\

We define the operators $L^{t_2}_{\zeta}: L^2([\zeta,t_2];U) \rightarrow Z$ and $N^{t_2}_{\zeta}: Z \times L^{\frac{1}{\xi}}([\zeta,t_2]; Z) \rightarrow Z$, respectively, as follows:
\begin{align*}
   & L^{t_2}_{\zeta} u = \int_{\zeta}^{t_2} \Psi_\alpha(t_2, s)Bu(s) \,d(s,\alpha), \quad \text{for } u \in L^2([\zeta,t_2];U). \\
    & N^{t_2}_{\zeta}(z_0, h)= \Psi_\alpha(t_2, \zeta)z_0 + \int_{\zeta}^{t_2} \Psi_\alpha(t_2, s)h(s) \, d(s,\alpha),\quad \textit{for }  (z_0, h) \in Z \times L^{\frac{1}{\xi}}([\zeta,t_2]; Z).
\end{align*}
Then, we have the following definition:
\begin{definition}
   The linear system (\ref{eq:Z}) is said to be exactly null controllable on $[\zeta,t_2]$ if the following condition holds:
\[
\text{Im } L^{t_2}_{\zeta} \supseteq \text{Im } N^{t_2}_{\zeta}.
\]
\end{definition}

\begin{remark}\label{remark1}
    As demonstrated in \cite{bib35}, the system (\ref{eq:Z}) is exactly null controllable if and only if there exists a positive constant $\gamma$ such that.
\[
\| (L^{t_2}_{\zeta})^* z \| \geq \gamma \| (N^{t_2}_{\zeta})^* z \|, \quad \text{for all } z \in Z.
\]
\end{remark}

The following result is particularly significant for our discussion.

\begin{lemma}\label{lemma:2}
    Assuming that the linear system (\ref{eq:Z}) is exactly null controllable, then the linear operator $H_\alpha := (L_{\zeta})^{-1}(N_{\zeta}^{t_2}): Z \times L^{\frac{1}{\xi}}([\zeta,t_2]; Z) \to L^2([\zeta,t_2]; U)$ is bounded, and the control
  \begin{align}\label{eq:u(t)}
u(t) &:= -\bigg( (L_{\zeta})^{-1} N_{\zeta}^{t_2}(z, h)\bigg)(t) \notag \\
&= -H_\alpha (z_0, h)(t) \notag \\
&= - (L_{\zeta})^{-1} \left( \Psi_\alpha(t_2, \zeta) z_0 
+ \int_{\zeta}^{t_2} \Psi_\alpha(t_2, s) h(s) \, d(s, \alpha) \right)(t),
\end{align}

linear system  (\ref{eq:Z}) steers from $z_0$ to $0$, where $L_{\zeta}$ is the restriction of $L_{\zeta}^{t_2}$ to $[\text{ker } L_{\zeta}^{t_2}]^\perp$.
\end{lemma}
\begin{proof} 

Since \( L_\zeta^{t_2} \) is a bounded linear operator, but not necessarily one-to-one. Define the null space of \( L_\zeta^{t_2} \) as  
\[
\ker  L_\zeta^{t_2} = \{ u \in L^2([\zeta,t_2]; U) :  L_\zeta^{t_2} u = 0 \}.
\]  
Let \( [\ker  L_\zeta^{t_2}]^\perp \) denote the orthogonal complement of \( \ker  L_\zeta^{t_2} \) in \( L^2([\zeta,t_2]; U) \). Consider the restriction of \(  L_\zeta^{t_2} \) to \( [\ker  L_\zeta^{t_2}]^\perp \), defined as  
\[
 L_\zeta : [\ker  L_\zeta^{t_2}]^\perp \to \mathrm{Im}( L_\zeta^{t_2}).
\]  
The operator \(  L_\zeta \) is necessarily one-to-one. Next, define  
\[
H_\alpha  : Z \times L^\frac{1}{\zeta}([\zeta,t_2]; Z) \to L^2([\zeta,t_2]; U), \quad H_\alpha (z, h) = ( L_\zeta)^{-1}N_{\zeta}^{t_2}(z, h).
\]

We will now show that \( H_\alpha  \) is a bounded linear operator. By the inverse mapping theorem, \( ( L_\zeta)^{-1} \) is bounded if both \( [\ker  L_\zeta^{t_2}]^\perp \) and \( \mathrm{Im}( L_\zeta^{t_2}) \) are Banach spaces. While \( [\ker  L_\zeta^{t_2}]^\perp \) is closed, \( \mathrm{Im}( L_\zeta^{t_2}) \) may not necessarily be closed. To establish that \( H_\alpha  \) is bounded, consider the following argument: Let \( (z_n, h_n) \) be a convergent sequence in \( Z \times L^\frac{1}{\zeta}([\zeta,t_2]; Z) \) such that \( H_\alpha (z_n, h_n) \) converges in \( Z \). Define  
\[
(z, h) = \lim_{n \to \infty} (z_n, h_n) \quad \text{and} \quad -u = \lim_{n \to \infty} H_\alpha (z_n, h_n).
\]  
Since \( [\ker  L_\zeta^{t_2}]^\perp \) is closed, it follows that \( u \in [\ker  L_\zeta^{t_2}]^\perp \). Moreover, because \(  L_\zeta^{t_2} \) and \( N_{\zeta}^{t_2} \) are continuous, we have  
\[
 L_\zeta^{t_2} u + N_{\zeta}^{t_2}(z, h) = \lim_{n \to \infty} \left( - L_\zeta^{t_2} H_\alpha (z_n, h_n) + N_{\zeta}^{t_2}(z_n, h_n) \right) = 0.
\]

By the construction of \(  L_\zeta \), it follows that \( u = -( L_\zeta)^{-1} N_{\zeta}^{t_2}(z, h) = -H_\alpha (z, h) \). Hence, \( H_\alpha  \) is a closed operator. By the closed graph theorem, \( H_\alpha  \) is bounded.

\end{proof}
\section{Exact null controllability of non-autonomous systems}
\label{sec:Exact}
This section delves into the exact null controllability problem concerning semi-linear systems (\ref{eq:P}). Initially, we define a mild solution and its exact null controllability.
\begin{definition}
  We define $x \in \mathbb{C}([\zeta,t_2]; Z)$ as a mild solution to problem (\ref{eq:P}) if it satisfies the following equation:
\begin{equation}
x(t) = \Psi_\alpha(t,\zeta)x_{0} + \int_{\zeta}^{t} \Psi_\alpha(t, s)Bu(s) \frac{ds}{s^{1-\alpha}} + \int_{\zeta}^{t} \Psi_\alpha(t,s)F(s,x(s)) \frac{ds}{s^{1-\alpha}},
\label{eq:P7}
\end{equation}
where $u(\cdot) \in L^2([\zeta,t_2];U)$ is the control function.
\end{definition}
\begin{definition}
    A system (\ref{eq:P}) is considered exactly null controllable if there exists a $u \in L^2([\zeta,t_2]; U)$ such that under this control, $x(t_2, u) = 0$.
\end{definition}
 We enforce the following constraints on the system (\ref{eq:P}):

\begin{enumerate}
    \item[($H_1$)] Let $t \in [\zeta,t_2]$, the function $F(t,\cdot):Z \rightarrow Z$ remain continuous. For any given $x \in Z$, the function $F(\cdot,x): [\zeta,t_2] \rightarrow Z$ is taken to be strongly measurable. Furthermore, there must exist constants $0\leq \xi < \min \{\alpha,\delta\},$ $\gamma>0$, For any $r>0$ and for all functions $h_r(t) \in L^{\frac{1}{\xi}}([\zeta,t_2]; \mathbb{R}^+)$
\[
\sup_{x \in W_r} \| F(t, x) \| \leq h_r(t),
\]
that hold for almost every $t \in [\zeta,t_2]$, with
\[
\liminf_{r \rightarrow +\infty} \frac{\| h_r(t) \|_{L^\frac{1}{\xi}}}{r} = \gamma < \infty,
\]
where $W_r = \{ x \in \mathbb{C}([\zeta,t_2]; Z): \| x(\cdot) \| \leq r,\quad r=r(\alpha) \},$ and this set of functions depends on \(\alpha\). \label{app: H} 
    \item[($H_2$)]\label{assum:H2_unique}  In $Z$, the linear system (\ref{eq:Z}) is exact null controllability on $[\zeta,t_2]$.\label{app: H2} 
\end{enumerate}

Next, for convenience, let us introduce the following notation:
$$\psi(r)=\sup \{h_r(\rho):\lVert \rho \rVert \leq r\},\quad \textit{then} \quad \liminf_{r \rightarrow +\infty} \frac{\| \psi(r) \|_{L^\frac{1}{\xi}}}{r} = \gamma ,$$ 
\begin{equation}\label{eq:N}
    \textit{and} \quad
N = \max \left\{ \bigg(  (t_2)^{\frac{\alpha-\xi}{1-\xi}}-(\zeta)^{\frac{\alpha-\xi}{1-\xi}} \bigg)^{1-\xi} \bigg(\frac{1-\xi}{\alpha-\xi}\bigg)^{1-\xi} :0\leq \xi < \min \{\alpha,\delta\}  \right\}.
\end{equation}

\begin{theorem}\label{thm:th6}
Let $x_0 \in Z$ and $W_r = \{ x \in \mathbb{C}([\zeta, t_2]; Z) : \| x(\cdot) \| \leq r \}, r > 0.$ If $-A(t)$ is the generator of the implicit evolution operator $\Psi_{\alpha}(t, s)$, and the nonlinear function satisfies Assumption ($H_1$), then the nonlinear system (\ref{eq:P}) has a mild solution on $[\zeta, t_2]$, provided the following conditions:  
\begin{equation}
M N\gamma \bigg( \lVert B \rVert \lVert H_\alpha  \rVert + 1 \bigg) < 1,  
\label{eq:P8}
\end{equation}
and there exists an $r$ such that  
\begin{equation}
\lVert H_\alpha  \rVert \bigg( \lVert x_0 \rVert + \psi(r) \sqrt{t_2 - \zeta} \bigg) \leq r.  
\end{equation}

\end{theorem}

\begin{proof}

 The proof will be given in several steps:

\textbf{Step 1:} The control function defined by (\ref{eq:u(t)}), $u(\cdot) = -H_\alpha (x_0, F)(s)$, is bounded on $W_r$. Indeed,
\begin{align*}
    \lVert u \rVert & = \bigg( \int_{\zeta}^{t_2} \lVert H_\alpha (x_0, F)(\tau) \rVert^2 \, d\tau \bigg)^\frac{1}{2} \\
    &\leq \lVert H_\alpha  \rVert\bigg( \lVert x_0\rVert +\bigg(\int_{\zeta}^{t_2} \bigg(   \lVert F(\tau,x(\tau)\rVert\, \bigg)^2 \, d\tau \bigg)^\frac{1}{2}\bigg)\\
    &\leq \lVert H_\alpha  \rVert\bigg( \lVert x_0\rVert +\bigg(\int_{\zeta}^{t_2} \bigg(   \lVert h_r(\tau)\rVert\, \bigg)^2 \,  d\tau \bigg)^\frac{1}{2}\bigg)\\
    & \leq  \lVert H_\alpha  \rVert\bigg( \lVert x_0\rVert +\bigg(\int_{\zeta}^{t_2} \bigg( \psi(r) \bigg)^2 \,  d\tau \bigg)^\frac{1}{2}\bigg)\\
    & =  \lVert H_\alpha  \rVert\bigg( \lVert x_0\rVert +\psi(r) \sqrt{t_2-\zeta}\bigg).\\
\end{align*}
\textbf{Step 2:}

Let $Q_\alpha : \mathbb{C}([\zeta,t_2]; Z) \rightarrow \mathbb{C}([\zeta,t_2]; Z)$ be defined as follows:
\[
(Q_\alpha x)(t) = \Psi_\alpha(t,\zeta)x_{0} + \int_{\zeta}^{t} \Psi_\alpha(t, s)Bu(s) \frac{ds}{s^{1-\alpha}} + \int_{\zeta}^{t} \Psi_\alpha(t,s)F(s,x(s)) \frac{ds}{s^{1-\alpha}}
\]
We prove the existence of a fixed point for $Q_\alpha$ in $\mathbb{C}([\zeta,t_2]; Z)$. To begin, we establish the existence of $r > 0$ such that $Q_\alpha$ maps $W_r$ into itself. Imagine that this is not the case. Then, for any $r > 0$ and $x(\cdot) \in W_r$, there exists some $t(r) \in [\zeta,t_2]$ such that $\| Q_\alpha x (t) \| > r$. However, considering Equation (\ref{Eq:equationQ1}) along with Assumption ($H_1$), this leads to
\begin{align*}
    r &< \lVert (Q_\alpha x)(t) \rVert \\
& \leq  \|\Psi_\alpha(t,\zeta)x_{0}\| + \left\| \int_{\zeta}^{t} \Psi_\alpha(t, s)B H_\alpha (x_0,F)(s) \frac{ds}{s^{1-\alpha}} \right\| + \left\| \int_{\zeta}^{t} \Psi_\alpha(t,s)F(s,x(s)) \frac{ds}{s^{1-\alpha}} \right\|\\
& \leq  \|\Psi_\alpha(t,\zeta)x_{0}\| +  \int_{\zeta}^{t} \left\|\Psi_\alpha(t, s)B H_\alpha (x_0,F)(s)\right\| \frac{ds}{s^{1-\alpha}}  +  \int_{\zeta}^{t}\left\| \Psi_\alpha(t,s)F(s,x(s))\right\| \frac{ds}{s^{1-\alpha}}\\
&\leq M \| x_0\|+M \| B\| \cdot \int_{\zeta}^{t_2} \| H_\alpha (x_0,F)(s)   \|\frac{ds}{s^{1-\alpha}}+ M \int_{\zeta}^{t_2} \|h_r(s)\| \frac{ds}{s^{1-\alpha}}\\
&\leq  M \| x_0\|+M \| B\| \cdot  \bigg(\int_{\zeta}^{t_2} s^\frac{{\alpha-1}}{1-\xi} \ ds \bigg)^{1-\xi} \bigg(\int_{\zeta}^{t_2} \lVert H_\alpha (x_0,F)(s)\rVert^\frac{1}{\xi} ds \bigg)^\xi\\
&+ M \bigg(\int_{\zeta}^{t_2} s^\frac{{\alpha-1}}{1-\xi} \ ds \bigg)^{1-\xi} \bigg(\int_{\zeta}^{t_2} \lVert h_r(s)\rVert^\frac{1}{\xi} ds \bigg)^\xi \\
&\leq  M \| x_0\|+M \| B\|  \bigg(\int_{\zeta}^{t_2} s^\frac{{\alpha-1}}{1-\xi} \ ds \bigg)^{1-\xi} \cdot \lVert H_\alpha \rVert\bigg( \lVert x_0\rVert +\bigg(\int_{\zeta}^{t_2} \bigg(   \lVert F(s,x(s)\rVert\,  \bigg)^\frac{1}{\xi} \, ds \bigg)^\xi \bigg)\\
&+ M \bigg(\int_{\zeta}^{t_2} s^\frac{{\alpha-1}}{1-\xi} \ ds \bigg)^{1-\xi} \bigg(\int_{\zeta}^{t_2} \lVert h_r(s)\rVert^\frac{1}{\xi} ds \bigg)^\xi \\
&\leq  M \| x_0\|+M \| B\| \cdot \lVert H_\alpha \rVert \bigg(\int_{\zeta}^{t_2} s^\frac{{\alpha-1}}{1-\xi} \ ds \bigg)^{1-\xi} \bigg( \lVert x_0\rVert +\bigg(\int_{\zeta}^{t_2}  \lVert h_r(s)\rVert^\frac{1}{\xi} \, ds \bigg)^\xi \bigg)\\
&+ M \bigg(\int_{\zeta}^{t_2} s^\frac{{\alpha-1}}{1-\xi} \ ds \bigg)^{1-\xi} \bigg(\int_{\zeta}^{t_2} \lVert h_r(s)\rVert^\frac{1}{\xi} ds \bigg)^\xi \\
&\leq  M \| x_0\|\\
&+M \| B\|  \lVert H_\alpha \rVert \bigg(  (t_2)^{\frac{\alpha-\xi}{1-\xi}}-(\zeta)^{\frac{\alpha-\xi}{1-\xi}} \bigg)^{1-\xi} \bigg(\frac{1-\xi}{\alpha-\xi}\bigg)^{1-\xi} \bigg( \lVert x_0\rVert +\bigg(  \int_{\zeta}^{t_2}  \lVert h_r(s)\rVert^\frac{1}{\xi} \, ds \bigg)^\xi \bigg)\\
&+ M \bigg(  (t_2)^{\frac{\alpha-\xi}{1-\xi}}-(\zeta)^{\frac{\alpha-\xi}{1-\xi}} \bigg)^{1-\xi} \bigg(\frac{1-\xi}{\alpha-\xi}\bigg)^{1-\xi} \bigg(\int_{\zeta}^{t_2} \lVert h_r(s)\rVert^\frac{1}{\xi} ds \bigg)^\xi \\
&\leq  M \| x_0\|+ M \| B\|  \lVert H_\alpha \rVert N \bigg( \lVert x_0\rVert +\lVert h_r(s)\rVert_{L^\frac{1}{\xi}} \bigg) + M N  \lVert h_r(s)\rVert_{L^\frac{1}{\xi}}
\end{align*}             
That is,
\begin{equation}\label{eq:r}
    r\leq  M \| x_0\|+M \| B\| \cdot \lVert H_\alpha \rVert N \bigg( \lVert x_0\rVert +\lVert h_r(s)\rVert_{L^\frac{1}{\xi}} \bigg) + M N  \lVert h_r(s)\rVert_{L^\frac{1}{\xi}}
\end{equation}

Dividing both sides of equation (\ref{eq:r}) by \( r \) and considering the limit as \( r \to +\infty \), we obtain:

$$M \lVert B\rVert \lVert H_\alpha \rVert N \gamma +M N \gamma \geq 1,$$

The statement contradicts equation (\ref{eq:P8}). Therefore, there exists a positive number \( r > 0 \) such that \( Q_\alpha (W_r) \subseteq W_r \).\\

Next, we establish the complete continuity of $Q_\alpha$ through a three-step process.\\

\textbf{Step 3:} We have to show that $Q_\alpha$ is continuous. Let ${x}_k \rightarrow {x}$ in ${W}_r$. Then,

\begin{align*}
    \| (Q_\alpha x_k)(t)-(Q_\alpha x)(t)\| 
    &= \|\Psi_\alpha(t,\zeta)x_{0}-\Psi_\alpha(t,\zeta)x_{0}\|\\
    &+ \left\| \int_{\zeta}^{t} \Psi_\alpha(t, s)B \bigg[H_\alpha (x_0,F(s,x_k(s)))-H_\alpha (x_0,F(s,x(s))\bigg] \frac{ds}{s^{1-\alpha}} \right\|\\
    &+ \left\| \int_{\zeta}^{t} \Psi_\alpha(t,s)\bigg[F(s,x_k(s))-F(s,x(s))\bigg] \frac{ds}{s^{1-\alpha}} \right\|.
\end{align*}

Given that \( H_\alpha  \) is a continuous function in \( \mathbb{C}([\zeta,t_2]; Z) \), and \( F(t, \cdot): Z \rightarrow Z \) is also continuous, the Lebesgue Dominated Convergence Theorem immediately implies that

$$\|(Q_\alpha x_k)(t) - (Q_\alpha x)(t)\| \rightarrow 0, \text{ as } k \rightarrow +\infty,$$

that is, $Q_\alpha$ is continuous.\\

\textbf{Step 4:} We aim to demonstrate the equicontinuity of the function family\\
$\{(Q_\alpha x)(\cdot): x \in W_r\} \subseteq \mathbb{C}([\zeta,t_2]; Z)$ on interval $[\zeta,t_2]$.

Consider $\zeta \leq t_3 < t_4 \leq t_2$, $x \in W_r$, and $\epsilon > 0$ sufficiently small. Then,
\begin{align*}
    &\left\|Q_\alpha x(t_4) - Q_\alpha x(t_3)\right\| \\
    &\leq \left\|\bigg[\Psi_\alpha(t_4,\zeta) - \Psi_\alpha(t_3,\zeta)\bigg]x_0 \right\| \\
    &\quad + \left\|\int_{\zeta}^{t_3}\bigg[\Psi_\alpha(t_4,s) - \Psi_\alpha(t_3,s)\bigg]
    \bigg(-BH_\alpha (x_0,F)(s) + F(s,x(s))\bigg) \frac{\mathrm{d}s}{s^{1-\alpha}} \right\| \\
    &\quad + \left\|\int_{t_3}^{t_4} \Psi_\alpha(t_4,s) 
    \bigg(-BH_\alpha (x_0,F)(s) + F(s,x(s))\bigg) \frac{\mathrm{d}s}{s^{1-\alpha}} \right\| \\
    \end{align*}
\begin{align*}
    &\leq \left\|\bigg[\Psi_\alpha(t_4,\zeta) - \Psi_\alpha(t_3,\zeta)\bigg]x_0 \right\| \\
    &\quad + \left\|\int_{\zeta}^{t_3 - \epsilon}\bigg[\Psi_\alpha(t_4,s) - \Psi_\alpha(t_3,s)\bigg]
    \bigg(-BH_\alpha (x_0,F)(s) + F(s,x(s))\bigg) \frac{\mathrm{d}s}{s^{1-\alpha}} \right\| \\
    &\quad + \left\|\int_{t_3 - \epsilon}^{t_3}\bigg[\Psi_\alpha(t_4,s) - \Psi_\alpha(t_3,s)\bigg]
    \bigg(-BH_\alpha (x_0,F)(s) + F(s,x(s))\bigg) \frac{\mathrm{d}s}{s^{1-\alpha}} \right\| \\
    &\quad + \left\|\int_{t_3}^{t_4} \Psi_\alpha(t_4,s) 
    \bigg(-BH_\alpha (x_0,F)(s) + F(s,x(s))\bigg) \frac{\mathrm{d}s}{s^{1-\alpha}} \right\| \\
    &:= I_1 + I_2 + I_3 + I_4.
\end{align*}

Certainly, as \( t_4 - t_3 \) approaches zero, it is evident that \( I_1 \rightarrow 0 \), owing to the strong continuity of \( \Psi_\alpha(t, \zeta) \) for \( t \geq\zeta \).\\

We obtain,
\begin{align*}
   & I_3 \leq\\
    & \ 2M  \|B\| \bigg(  (t_3)^{\frac{\alpha-\xi}{1-\xi}}-(t_3- \epsilon)^{\frac{\alpha-\xi}{1-\xi}} \bigg)^{1-\xi} \bigg(\frac{1-\xi}{\alpha-\xi}\bigg)^{1-\xi}  \|H_\alpha \|  
   \bigg( \lVert x_0\rVert +\bigg(  \int_{t_3-\epsilon}^{t_3}  \lVert h_r(s)\rVert^\frac{1}{\xi} \, ds \bigg)^\xi \bigg) \\
    &+2 M \cdot\bigg(  (t_3)^{\frac{\alpha-\xi}{1-\xi}}-(t_3-\epsilon)^{\frac{\alpha-\xi}{1-\xi}} \bigg)^{1-\xi} \bigg(\frac{1-\xi}{\alpha-\xi}\bigg)^{1-\xi} \bigg(\int_{t_3-\epsilon}^{t_3} \lVert h_r(s)\rVert^\frac{1}{\xi} ds \bigg)^\xi.
\end{align*}

\begin{align*}
    I_4 \leq & \ M  \|B\|  \bigg(  (t_4)^{\frac{\alpha-\xi}{1-\xi}}-(t_3)^{\frac{\alpha-\xi}{1-\xi}} \bigg)^{1-\xi} \bigg(\frac{1-\xi}{\alpha-\xi}\bigg)^{1-\xi}\|H_\alpha \|  
    \bigg( \lVert x_0\rVert +\bigg( \int_{t_3}^{t_4}  \lVert h_r(s)\rVert^\frac{1}{\xi} \, ds \bigg)^\xi \bigg) \\
    &+ M \bigg(  (t_4)^{\frac{\alpha-\xi}{1-\xi}}-(t_3)^{\frac{\alpha-\xi}{1-\xi}} \bigg)^{1-\xi} \bigg(\frac{1-\xi}{\alpha-\xi}\bigg)^{1-\xi} \bigg(\int_{t_3}^{t_4} \lVert h_r(s)\rVert^\frac{1}{\xi} ds \bigg)^\xi.
\end{align*}

This demonstrates that \( I_3 \) and \( I_4 \) also tends to zero as \( t_4 - t_3 \) approaches 0, independently of \( x \). Furthermore, Lemma \ref{lemma:Q3} and analogous estimates guarantee that \( I_2 \) approaches 0 as \( t_4 - t_3 \) tends to 0. Consequently, the set \( \{(Q_\alpha x)(\cdot) : x \in W_r\} \) is equicontinuous on \( [\zeta,t_2] \).

\textbf{Step 5:} We confirm that for any $t$ belonging to $[\zeta,t_2]$, the set $\{Q_\alpha (x)(t): x \in W_r\}$ is relatively compact in $Z$.\\

If $t = \zeta$, then $Q_\alpha x(\zeta) = x_0 $, so it holds true for $t = \zeta$.\\

Now, we define
\begin{align*}
    x^{\epsilon}(t)
    &= \Psi_\alpha(t,\zeta)x_{0} - \int_{\zeta}^{t-\epsilon} \Psi_\alpha(t, s)BH_\alpha (x_0,F)(s) \frac{ds}{s^{1-\alpha}} + \int_{\zeta}^{t} \Psi_\alpha(t,s)F(s,x(s)) \frac{ds}{s^{1-\alpha}},
\end{align*}
\hspace{3cm}$\textit{for} \quad t \in \left(\zeta,t_2\right].$\\
Since $\Psi_\alpha(t, s)$ $(t > s \geq \zeta)$ is compact, the set $Q_{\alpha}^{\epsilon}(x)(t) := \{x^{\epsilon}(t) : x \in W_r\}$ is relatively compact in $Z$ for every $\epsilon$, $0 < \epsilon < t \leq t_2$. Furthermore, for every $x \in W_r$,
\begin{align*}
    &\| Q_\alpha(x)(t) - x^\epsilon(t) \|= \left\| \int_{t-\epsilon}^t \Psi_\alpha(t, s) \left[ -BH_\alpha (s, F)(s) + F(s, x(s)) \right] \frac{ds}{s^{1-\alpha}} \right\| \\
    &\leq \int_{t-\epsilon}^t \left\| \Psi_\alpha(t, s) \left[ -BH_\alpha (s, F)(s) + F(s, x(s)) \right] \right\| \frac{ds}{s^{1-\alpha}}\\
    &\leq M \|B\|\cdot \lVert H_\alpha \rVert \bigg(  (t)^{\frac{\alpha-\xi}{1-\xi}}-(t-\epsilon)^{\frac{\alpha-\xi}{1-\xi}} \bigg)^{1-\xi} \bigg(\frac{1-\xi}{\alpha-\xi}\bigg)^{1-\xi}
    \bigg( \lVert x_0\rVert +\bigg(\int_{t-\epsilon}^{t} \lVert h_r(s)\rVert^\frac{1}{\xi} ds \bigg)^\xi \bigg) \\
    &+ M \bigg(  (t)^{\frac{\alpha-\xi}{1-\xi}}-(t-\epsilon)^{\frac{\alpha-\xi}{1-\xi}} \bigg)^{1-\xi} \bigg(\frac{1-\xi}{\alpha-\xi}\bigg)^{1-\xi} \bigg(\int_{t-\epsilon}^{t} \lVert h_r(s)\rVert^\frac{1}{\xi} ds \bigg)^\xi.
\end{align*}

Then, as $\epsilon \to 0^+$, we observe that there are relatively compact sets arbitrarily close to the set $\{ Q_\alpha(x)(t): x \in W_r \}$. Hence, we deduce that $Q_\alpha x(t)$ is relatively compact in $Z$.

Hence, according to the infinite-dimensional version of the Ascoli-Arzelà theorem, \( Q_\alpha \) constitutes a completely continuous operator on \( \mathbb{C}([\zeta,t_2]; Z) \). As a result, according to Schauder's fixed-point theorem, the system (\ref{eq:P}) has at least one fixed point \(x(t)\), which is a mild solution.

\end{proof}

The following existence and uniqueness result for the Cauchy problem (\ref{eq:P}) is based on the Banach contraction principle. We impose the following assumptions:

\begin{itemize}
    \item[($H_3$)]\label{assum:H3_unique} The function \( f(t, s(t)) \) is strongly measurable for any \( x \in \mathbb{C}([\zeta, t_2]; W_r) \) and \( t \in [\zeta, t_2] \).
    
    \item[($H_4$)]\label{assum:H4_unique} There exists a constant \( 0 \leq \xi_1 < \min\{\alpha, \delta\} \) and a function \( \varrho \in L^{\frac{1}{\xi_1}}([\zeta, t_2]; \mathbb{R}^+) \) such that for any \( x, y \in \mathbb{C}([\zeta, t_2]; W_r) \), the following inequality holds:
    \begin{equation*}
        |F(t, x(t)) - F(t, y(t))| \leq \varrho(t) \|x - y\|, \quad t \in [\zeta, t_2].
    \end{equation*}
\end{itemize}

\begin{theorem}\label{thm:Lip}
    If assumptions ($H_1$)-($H_4$) hold, then the Cauchy problem (\ref{eq:P}) has a unique mild solution provided that
    \begin{equation}\label{eq:Eq_Lip}
        MM_1 N < 1,
    \end{equation}
    where $M_1= \| \varrho\|_{L^{\frac{1}{\xi_1}}[\zeta,t_2]}$.
\end{theorem}

\begin{proof}
    From (Step-2), we know that $Q_\alpha$ is an operator mapping $W_r$ to itself. For any $x,y \in W_r$ and $t \in [\zeta,t_2]$, using $(H_4)$, (\ref{eq:N}), and (\ref{Eq:equationQ1}), we obtain
    \begin{align*}
        \lVert (Q_\alpha x)(t) - (Q_\alpha y)(t)\rVert 
        \leq & \lVert \Psi_\alpha(t,\zeta)x_{0} -\Psi_\alpha(t,\zeta)x_{0}\rVert \\
        &+\lVert \int_{\zeta}^{t} \bigg[\Psi_\alpha(t, s)Bu(s) - \Psi_\alpha(t, s)Bu(s) \bigg] \frac{ds}{s^{1-\alpha}}\rVert\\
        &+\lVert \int_{\zeta}^{t_2} \Psi_\alpha(t_2,s)\bigg[ F(s,x(s))- F(s,y(s))\bigg] \frac{ds}{s^{1-\alpha}}\rVert \\
        & \leq  \int_{\zeta}^{t_2} \Psi_\alpha(t_2,s) \varrho(s) \lVert x-y\rVert s^{\alpha -1} ds\\
        &\leq M \bigg( \int_{\zeta}^{t_2} s^{\frac{\alpha-1}{1-\xi_1}} ds  \bigg)^{1-\xi_1} \cdot \lVert \varrho(s) \rVert_{L^{\frac{1}{\xi_1}}[\zeta,t_2]} \lVert x-y \rVert \\
        & \leq M M_1 \bigg( \frac{1-\xi_1}{\alpha -\xi_1}\bigg) \bigg[ (t)^{\frac{\alpha -\xi_1}{1-\xi_1}} -(\zeta)^{\frac{\alpha -\xi_1}{1-\xi_1}} \bigg] \lVert x-y \rVert\\
        &\leq M M_1 N \lVert x-y \rVert.
    \end{align*}

    Thus, we obtain  
    $$ \lVert Q_\alpha x -Q_\alpha y \rVert \leq \big(M M_1 N\big) \lVert x-y \rVert, $$  
    which implies that $Q_\alpha$ is a contraction by (\ref{eq:Eq_Lip}). By applying Banach’s fixed-point principle, we conclude that $Q_\alpha$ has a unique fixed point in $W_r$. This completes the proof.  
\end{proof}

\begin{theorem}\label{thm:th7}
   Given that Assumptions ($H_1$) and ($H_2$) hold, the $\alpha$-conformable differential system (\ref{eq:P}) is exactly null-controllable on the interval $[\zeta, t_2]$ if  
\begin{equation}
M N \gamma \bigg( \lVert B \rVert \lVert H_\alpha  \rVert + 1 \bigg) < 1,  
\end{equation}
and there exists an $r > 0$ such that  
\begin{equation}
\lVert H_\alpha  \rVert \bigg( \lVert x_0 \rVert + \psi(r) \sqrt{t_2 - \zeta} \bigg) \leq r.  
\end{equation}

\end{theorem}

\begin{proof}
    For $x \in \mathbb{C}([\zeta,t_2]; Z)$, and  the control $u$ is determined by the expression:
\begin{equation}
    u(t) = -H_\alpha (x_0 , F)(t) = - (L_{\zeta})^{-1}\left( \Psi_\alpha(t_2,\zeta) x_0+\int_{\zeta}^{t_2} \Psi_\alpha(t_2, s)F(s,x(s))\frac{ds}{s^{1-\alpha}} \right)(t).
    \label{eq:P9}
\end{equation}

Now, it becomes evident that the function \( u(t) \) is properly defined due to \( x_0  \in Z \) and \( F(s, x(s)) \in L^{\frac{1}{\xi}}([\zeta,t_2]; Z)\). Additionally, this control steers from \( x_0 \) to 0.

That is,
\begin{align*}
    &x(t_2,u)=\\
    & \Psi_\alpha(t_2,\zeta)x_{0} + \int_{\zeta}^{t_2} \Psi_\alpha(t_2, s)Bu(s) \frac{ds}{s^{1-\alpha}} + \int_{\zeta}^{t_2} \Psi_\alpha(t_2,s)F(s,x(s)) \frac{ds}{s^{1-\alpha}}\\
    & = \Psi_\alpha(t_2,\zeta)x_{0}\\
    &+ \int_{\zeta}^{t_2} \Psi_\alpha(t, s)B\left[  - (L_{\zeta})^{-1}\left( \Psi_\alpha(t_2,\zeta) x_0+\int_{\zeta}^{t_2} \Psi_\alpha(t_2, s)F(s,x(s))\frac{ds}{s^{1-\alpha}} \right)(t)  \right] \frac{ds}{s^{1-\alpha}}\\
    & + \int_{t_{1}}^{t_2} \Psi_\alpha(t_2,s)F(s,x(s)) \frac{ds}{s^{1-\alpha}},
\end{align*}
It follows easily that,
 $$x(t_2,u)=0.$$
Consequently, system (\ref{eq:P}) is exactly null controllable on $[\zeta,t_2].$
\end{proof}

\section{Application}
\label{sec:Application}
\begin{example}
    As an application of Theorem \ref{thm:th7}, we consider the following partial differential control system:
\begin{align}
    \begin{cases}
        \frac{\partial^{\alpha} u(x, t)}{\partial t^{\alpha}} = \frac{\partial^2 u(x, t)}{\partial x^2} + p(t)u(x, t) + \mu(x, t) + F(t,u(x,t)), \quad 0<\alpha \leq 1,\\
        u(0, t) = u(\pi, t) = 0, \quad t \in [t_1=0,t_2=T], \\
        u(x, \zeta)  = u_0, \quad x \in [0, \pi],\quad  \zeta \in [0,T]\cap [0,(\alpha T)^\frac{1}{\alpha}],
    \end{cases}
    \label{eq:partial_control_system}
\end{align}

where, $ p(\cdot) : [0,(\alpha T)^\frac{1}{\alpha}] \rightarrow \mathbb{R^+}$ is a continuous function.\\
\end{example}

Let \( Y = L^2([0, \pi]) \), and let the operators \( A(t) \) be defined by (see \cite{bib35}, page 131):  
\[
A(t)z = z'' + p(t)z,
\]
with the common domain
\[
D(A(t)) = \{ z(\cdot) \in Y : z, z' \text{ are absolutely continuous}, z'' \in Y, z(0) = z(\pi) = 0 \}.
\]

We now verify that \( A(t) \) generates an evolution operator \( \Psi_\alpha(t, s) \), which satisfies assumptions (\ref{app: a} through (\ref{app: d}. The evolution operator \( \Psi_\alpha(t, s) \) is given by
\[
\Psi_\alpha(t, s) = S_s(t-s)e^{-\int_\frac{s^\alpha}{\alpha}^\frac{t^\alpha}{\alpha} p(\tau) d(\tau, \alpha)},
\]
where \( S_s(t) \), for \( t \geq 0 \), is the compact \( C_0 \), \(\alpha\)-conformable semigroup generated by the operator \( A \) with

$$Az = z'',$$ 

for \( z \in D(A(t)) \), it is straightforward to verify that \( A \) has a discrete spectrum with eigenvalues \( -n^2 \) for \( n \in \mathbb{N} \), and corresponding normalized eigenvectors \( e_n(x) = \sqrt{2} \sin(n x) \). Consequently, for \( z \in D(A(t)) \) (see \cite{bib35}, page 29), we have:
\[ A(t)z = \sum_{n=1}^\infty (-n^2 - p(t))\langle z,e_n\rangle e_n, \]
The common domain coincides with that of the operator $A$. Moreover, for each $z \in Y$, the expression for $\Psi_\alpha(t, s)z$ is given by
\[ \Psi_\alpha(t, s)z = \sum_{n=1}^\infty e^{-n^2(\frac{t^\alpha}{\alpha}-\frac{s^\alpha}{\alpha})-\int_\frac{s^\alpha}{\alpha}^\frac{t^\alpha}{\alpha} p(\tau)\,d(\tau,\alpha)} \langle z, e_n \rangle e_n. \]
Therefore, for every $z \in Y,$
\begin{align*}
   & \Psi_\alpha(t, s)z =\Psi_\alpha^*(t, s)z\\
    &= \sum_{n=1}^\infty e^{-n^2(\frac{t^\alpha}{\alpha}-\frac{s^\alpha}{\alpha})-\int_\frac{s^\alpha}{\alpha}^\frac{s^\alpha}{\alpha} p(\tau)\,d(\tau,\alpha)} \sin(nx) \int_{0}^{\pi} z(\beta) \sin(n\beta) \, d\beta, \quad z \in Y,
\end{align*}
For the function \( F(\cdot, \cdot) : [t_1, t_2] \times \mathbb{R} \to \mathbb{R} \), we make the following assumptions:

(i) For any \( x \in \mathbb{R} \), \( F(\cdot, x) \) is measurable on \( [t_1=0, t_2=T] \).

(ii) For any fixed \( t \in [t_1,t_2] \), \( F(t, \cdot) \) is continuous, and there exists a constant \( c \geq 0 \) such that
\[ |F(t, x)| \leq c|x| ,\quad \textit{for all} \quad t \in [t_1,t_2] . \]
Under these assumptions, the function \( F(t, w) \) also satisfies the Assumption ($H_1$).\\
Now, let \( \mu \in L^2([0, T]; Y) \), and consider \( B = I \), which implies \( B^* = I \). Consequently, the system
\eqref{eq:partial_control_system} transforms into the system \eqref{eq:P}. Next, we analyze the resulting linear system with an additional term \( h \in L^{\frac{1}{\xi}}([0, T], Y) \):

\begin{equation}
\begin{cases}
\frac{\partial^{\alpha} z(x, t)}{\partial t^{\alpha}} = \frac{\partial^2 z}{\partial x^2}(x, t) + p(t)z(x, t) + \mu(x, t) + h(x,t), \quad \alpha \in (0,1] \\
z(0, t) = z(\pi, t) = 0, \quad t \in [t_1=0,t_2=T],\\
z(x, \zeta) = z_0, \quad x \in [0, \pi] ,\quad  \zeta \in [0,T]\cap [0,(\alpha T)^\frac{1}{\alpha}],
\end{cases}
\label{eq:4.2}
\end{equation}

By virtue of Remark \ref{remark1}, the exact null controllability of the linear system~\eqref{eq:4.2} is equivalent to the existence of $\gamma > 0$ such that

$$\int_{\zeta}^{T} \|B^*\Psi_\alpha^*(T,s)z\|^2 d(s,\alpha) \geq \gamma \left(\| \Psi_\alpha^*(T,\zeta)z\|^2 + \int_{\zeta}^{T} \|\Psi_\alpha^*(T,s)z\|^2 d(s,\alpha) \right),
$$

or equivalently,

\begin{equation}
    \int_{\zeta}^{T} \|\Psi_\alpha(T,s)z\|^2 d(s,\alpha) \geq \gamma \left(\| \Psi_\alpha(T,\zeta)z\|^2 + \int_{\zeta}^{T} \|\Psi_\alpha(T,s)z\|^2 d(s,\alpha) \right).
    \label{eq:4.3}
\end{equation}

In \cite{bib35}, it is demonstrated that the linear system [\ref{eq:4.2}] with \( h = 0 \) is exactly null controllable if

$$  \int_{\zeta}^{T} \|\Psi_\alpha(T,s)z\|^2 d(s,\alpha) \geq (T-\zeta) \| \Psi_\alpha(T,\zeta)z\|^2$$

Hence,
$$ \frac{1}{1+(T-\zeta)} \int_{\zeta}^{T} \|\Psi_\alpha(T,s)z\|^2 d(s,\alpha) \geq \frac{(T-\zeta)}{1+(T-\zeta)} \| \Psi_\alpha(T,\zeta)z\|^2$$

or,
$$ \left(\frac{(T-\zeta)+1-(T-\zeta)}{1+(T-\zeta)}\right) \int_{\zeta}^{T} \|\Psi_\alpha(T,s)z\|^2 d(s,\alpha) \geq \frac{(T-\zeta)}{1+(T-\zeta)} \| \Psi_\alpha(T,\zeta)z\|^2$$

or,
$$   \int_{\zeta}^{T} \|\Psi_\alpha(T,s)z\|^2 d(s,\alpha) \geq \frac{(T-\zeta)}{1+(T-\zeta)} \left(\| \Psi_\alpha(T,\zeta)z\|^2 + \int_{\zeta}^{T} \|\Psi_\alpha(T,s)z\|^2 d(s,\alpha) \right).$$

Hence, when \(\gamma = \frac{(T - \zeta)}{(T - \zeta) + 1}\), equation (\ref{eq:4.3}) remains valid. As a result, the linear system described by Equation (\ref{eq:4.2}) exhibits exact null controllability over the interval \([\zeta, T]\). Consequently, Theorem \ref{thm:th7} implies that the system in equation (\ref{eq:partial_control_system}) is exactly null controllable over the range \([\zeta, T]\), provided that the condition in inequality (\ref{eq:P8}) is satisfied.

\begin{remark}
  An example of a semilinear function that satisfies assumptions ($H_1$)  is 
\[
F(t, u(x, t)) = \sin(x) + (t)^{2\alpha} \cos(2x),
\]
and $p(t) = t^3 + t^2 + 1 $, then  the system (\ref{eq:partial_control_system}) becomes exactly null controllable.

\end{remark}

 \bibliographystyle{elsarticle-num} 
 \bibliography{cas-refs}





\end{document}